\let\mathcal\mathscr
\let\mathcal\mathscr
\numberwithin{equation}{section}
\newtheorem{theorem}{Theorem}[section]
\newtheorem{lemma}[theorem]{Lemma}
\theoremstyle{definition}
\newtheorem{remark}[theorem]{Remark}
\newcommand{\R}{\widehat{\mathbb R}^2}
\newcommand{\sll}{\mathfrak sl(2,\mathbb R)}
\newcommand{\C}{\mathbb C}
\newcommand{\ve}{\varepsilon}
\def\NN{{\mathbb N}}
\def\QQ{{\mathbb Q}}
\def\RR{{\mathbb R}}
\def\ZZ{{\mathbb Z}}
\def\vecnull{{\text{\boldmath$0$}}}
\def\scrO{{\mathcal O}}
\def\fm{\mathfrak{m}}
\def\C{\operatorname{C{}}}
\def\L{\operatorname{L{}}}
\def\SL{\operatorname{SL}}
\def\mod{\operatorname{mod}}
\newcommand{\x}{\mathbf{x}}
\newcommand{\y}{\mathbf{y}}
\renewcommand{\t}{\mathbf{t}}
\newcommand{\z}{\mathbf{z}}
\renewcommand{\v}{\mathbf{v}}
\newcommand{\h}{\mathbf{h}}
\def\SLR{\SL(2,\RR)}
\newcommand{\smatr}[4]{\bigl( \begin{smallmatrix} #1 & #2 \\ #3 & #4 \end{smallmatrix} \bigr) }
\begin{document}

\title{A sparse equidistribution result for $(\SLR/\Gamma_0)^{n}$}
\author{Pankaj Vishe}
\address{
Department of Mathematical Sciences\\
Durham University\\
Durham\\ DH1 3LE\\ United Kingdom}
\email{pankaj.vishe@durham.ac.uk}
\begin{abstract}
Let $G=\SLR^n$, let $\Gamma=\Gamma_0^n$, where $\Gamma_0$ is a co-compact lattice in $\SLR$, let $F(\x)$ be a non-singular quadratic form and let $u(x_1,...,x_n):=\smatr{1}{x_1}{0}1\times...\times \smatr{1}{x_n}{0}1 $ denote unipotent elements in $G$ which generate an $n$ dimensional horospherical subgroup. We prove that in the absence of any local obstructions for $F$, given any $x_0\in G/\Gamma$, the sparse subset $\{u(\x)x_0:\x\in\ZZ^n, F(\x)=0\}$ equidistributes in $G/\Gamma$ as long as $n\geq 481$, independent of the spectral gap of $\Gamma_0$.
\end{abstract}
\maketitle

\section{Introduction}
Let $G$ be a Lie group, let $\Gamma$ be a lattice in $G$ and let $M=G/\Gamma$. Let $U$ be a unipotent subgroup of $G$. Recently there has been an increased interest in understanding the behaviour of sparse arithmetic subsets of $U$ orbits in $M$. It is widely considered that under some reasonable assumptions on $G$ and $\Gamma$, certain discrete arithmetic subsets of dense unipotent orbits should equidistribute in $M$, independent of the choice of the starting points of these orbits.  Let $\Gamma_0\subset G_0=\SLR$ be a co-compact lattice, let $M_0=G_0/\Gamma_0$ and given $x\in\RR$, let $$
u_0(x) := \left(\begin{array}{cc} 1 & x \\ 0 & 1 \end{array}\right)\, 
$$
be the matrices generating a one-parameter horocycle flow on $G_0$. In this case, a conjecture by Shah \cite{Shah94}, later generalised by  Margulis \cite{Margulis00} predicts that ergodic averages of these unipotent trajectories evaluated at polynomial times equidistribute irrespective of their starting points. Namely, that the set $\{u_0(F(n))x_0:n\in\NN\}$ equidistributes in $M_0$ for any polynomial $F$ and any $x_0\in M_0$. This conjecture remains completely open even in the case $F(x)=x^2$. There have been several works which establish results of {\em metric} nature. Namely, they bound the size of the set of initial points which violate this expectation, as seen by works of Bourgain \cite{Bourgain89}, Ubis and Sarnak \cite{SarnakUbis15} and Katz \cite{Katz16} among others. Apart from works of Venkatesh \cite{Venkatesh05}, Tanis and the author \cite{Tanis_Vishe17} and Flaminio, Forni and Tanis \cite{FFT16}, there haven't been many results available which establish such sparse equidistribution results for {\em every} such orbit.

There are several related generalisations that have been studied. Given $n$ commuting ergodic invertible measure preserving one parameter flows $U_1,U_2,\dots, U_n$ on a probability space $(X,\mu)$, let $U(\t):=(U_1(t_1),...,U_n(t_n))$ denote the corresponding flow on the product space $X\times...\times X$. Jones \cite{Jones93} considered ergodic averages of functions on the expanding spherical sub-orbits $U(B_r)x_0$, where $B_r$ denotes the $n$ dimensional sphere $x_1^2+...+x_n^2=r$. Namely, given $f\in \L^p(X\times...\times X)$, \cite{Jones93} considers the spherical averages
\begin{equation}
\label{eq:Spherical}
\int_{\t\in B_r}f(U(\t)x_0)d\sigma_{B_r}(\t),  
\end{equation}
where $\sigma_{B_r}$ denotes an appropriately normalised Haar probability measure on $B_r$. Jones
 proved that the spherical average in \eqref{eq:Spherical} tends to the spatial average of $f$ as $r\rightarrow\infty$ for {\em almost every} $x_0$, as long as $n\geq 3$ and $p\geq n/n-1$. Moreover, Jones in \cite[Theorem 3.1]{Jones93} also attempts a discrete version of this problem. He was able to prove the equidistribution of discrete averages of integer points lying on the annuli
\begin{equation*}
\lim_{k\rightarrow\infty}\#\{\t\in\ZZ^n, r_k\leq |\t|\leq r_k+d_k\}^{-1}\sum_{\substack{\t\in\ZZ^n\\ r_k-d_k\leq |\t|\leq r_k+d_k}}f(U(\t)x_0),
\end{equation*} 
for almost every $x_0$, where $d_kr_k\rightarrow \infty$ and the thickness $d_k$ is bounded, as long as $n\geq 5$ and $p\geq n/n-1 $. This result thus just falls short of being able to cope with the natural discrete analogue: $\{U(\x)x_0: \x\in \ZZ^n\cap B_r\}$ as $r\rightarrow\infty$.  As in the case of $\SLR/\Gamma_0$, it is believed that under suitable conditions on $X=G/\Gamma$, these sparse arithmetic averages of $U$ orbits should equidistribute for every such orbit. In the special case when $X=\RR/\ZZ $ and $U_i(t)(x):=tx\bmod{1}$, Magyar in \cite{Magyar14}, proves the equidistribution the sparse set
$$\{U(\t)x_0\bmod{1}:x_0\in(\RR/\ZZ)^n, \t\in \ZZ^n\cap \{F(\t)=\lambda\}\cap (-P,P)^n\}, $$
where $F$ is a polynomial with a non-singular leading degree form, as $P\rightarrow\infty$ for every {\em Diophantine} initial point $x_0$ as long as $n\gg_{F,\lambda}1$. 

We now state the context in this paper. As before, let $\Gamma_0\subset G_0=\SLR$ be a co-compact lattice.
Let $G=\SLR^n$, let $\Gamma=\Gamma_0\times\cdots\times\Gamma_0$, let $M= G/\Gamma$ and let $d\mu_G=d\mu_{G_0}\times...\times d\mu_{G_0}$, where $d\mu_{G_0}$ denotes the Haar measure on $M_0$ normalised such that $\int_{M_0}d\mu_{G_0}(g)=1$. Let  $$U:=\{u(\x):=u_0(x_1)\times...\times u_0(x_n), \,\,x_1,...,x_n\in \RR\}$$ denote the expanding horospherical subgroup corresponding to the action of a suitable ray of a standard one parameter geodesic flow. The equidistribution of the whole $U$ orbit for every $x_0\in M$ follows from Ratner's equidistribution theorems. In the vein of the aforementioned conjectures by Shah and Margulis, a question by Lindenstrauss on spherical horospheric  averages led Ubis \cite{Ubis17} to investigate analogues of \eqref{eq:Spherical} in this particular setting. Ubis establishes the equidistribution of orbits of the type $U(V)x_0$, where $V$ is any totally curved sub-manifold of $\RR^n$ of {\em low} co-dimension, as long as $n$ is {\em large enough} depending on the spectral gap of $\Gamma_0$ as well as the co-dimension of the manifold. He achieves this by locally approximating pieces of such a manifold by quadratic hypersurfaces and then proving the equidistribution of {\em every} $U$-orbit restricted to a quadric hypersurface in $\RR^n$. 

Here, we work on a natural generalisation of the works of Magyar \cite{Magyar14} and Ubis \cite{Ubis17}.  In particular, we consider the sparse subsets of integer points in $U$ orbits in $M$, which lie on a quadratic hypersurface in $\RR^n$. Let $F(\x)=\x^tL\x\in\ZZ[\x]$ be a smooth quadratic form in $n$ variables defined by an invertible $n\times n$ matrix $L$ with integer entries. We further assume that $F$ has no local obstructions, i.e. that $F(\x)=0$ for some $\x\in\RR^n\setminus\vecnull$ as well as for some $\x\in\QQ_p^n\setminus\vecnull$ for each prime $p$. Given any parameter $P>0$, a standard circle method result (see \cite[Theorem 1]{Birch61} for example) hands us a constant $0<\gamma'\ll 1$ such that the following asymptotic formula holds as long as $n\geq 5$:
\begin{equation}\label{eq:NFPdef}
N_F(P):=\#\{\x\in\ZZ^n:|\x|<P, F(\x)=0\}=C_F P^{n-2}+O(P^{n-2-\gamma'}).
\end{equation}
 The implied constant $C_F>0$ if and only if $F$ has no local obstructions. Here and throughout, we use the notation $A\ll B$ to denote that $A\leq CB$, for some constant $C$. Throughout, our implied constants in $\ll$ are allowed to depend freely on $\Gamma, n$ and  $F$. Any further dependence will be explicitly denoted via adding a subscript to $\ll$. 

Our main goal is to prove the following sparse equidistribution/mixing result:
\begin{theorem}\label{thm:main thm1}
Let $G_0=\SLR$, let $\Gamma_0$ be a co-compact lattice in $G_0$,  let $G=\SLR^n$ and let $\Gamma=\Gamma_0^n$. Then for any non-singular quadratic form $F(\x)\in\ZZ[x_1,...,x_n]$ with no local obstructions, any point $x_0\in G/\Gamma$ and any continuous function $f\in\C(G/\Gamma)$,  we have
\begin{equation*}
\lim_{P\rightarrow\infty }\frac{1}{N_F(P)}\sum_{\substack{\x\in\ZZ^n, |\x|<P\\ F(\x)=0 }}f(u(\x)x_0)=\int f(x) d\mu_G(x),
\end{equation*}
as long as $n\geq 481$.
\end{theorem}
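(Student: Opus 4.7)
The plan is to combine the Hardy--Littlewood circle method (to encode the arithmetic constraint $F(\x)=0$) with quantitative equidistribution of the horospherical $U$-action on $M$ in a form that is \emph{independent of the spectral gap of $\Gamma_0$}. By density of smooth $K$-finite functions in $\C(G/\Gamma)$ and by linearity, I reduce to the case when $f$ is smooth and $K$-finite with $\int_M f\,d\mu_G=0$; the target then becomes to show
\begin{equation*}
S_f(P) := \int_0^1\sum_{\x\in\ZZ^n}w(\x/P)\,e(\alpha F(\x))\,f(u(\x)x_0)\,d\alpha = o(P^{n-2}),
\end{equation*}
where $w$ is a smooth cutoff approximating $|\x|\le P$ and $e(t):=e^{2\pi i t}$. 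I then Farey-dissect $[0,1]$ into major arcs $\mathfrak M$ centred at rationals $a/q$ with small $q$ and a minor-arc complement $\mathfrak m$.

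On $\mathfrak m$, I would bound the inner $\x$-sum by Weyl differencing: since $F(\x+\h)-F(\x)=\langle (L+L^t)\h,\x\rangle+F(\h)$ is linear in $\x$, one Cauchy--Schwarz step converts the sum into linear exponential sums controlled by the Dirichlet approximation properties of $\alpha$ on $\mathfrak m$, while the twist $f(u(\x)x_0)\overline{f(u(\x+\h)x_0)}$ is handled via the trivial bound $\|f\|_\infty^2$. The aim is a minor-arc saving of the form $P^{n-2-\eta}$ for some explicit $\eta>0$, available once $n$ is large enough.

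For $\alpha=a/q+\beta\in\mathfrak M$, the substitution $\x=q\y+\vecb$ factors the inner sum as
\begin{equation*}
\sum_{\vecb\bmod q}e\!\left(\tfrac{aF(\vecb)}{q}\right)\sum_{\y\in\ZZ^n}e(\beta F(q\y+\vecb))\,w\!\left(\tfrac{q\y+\vecb}{P}\right)f\bigl(u(q\y)u(\vecb)x_0\bigr).
\end{equation*}
For each $(a,q,\vecb,\beta)$ the inner $\y$-sum is a lattice horospherical average of $f$ based at $u(\vecb)x_0$, twisted by the quadratic phase $e(\beta F(q\y+\vecb))$. Applying Poisson summation in $\y$ turns it into a dual sum of oscillatory integrals of $f\circ u$, which I would analyse via the tensor product decomposition of the $G$-representation $L^2(M)=L^2(M_0)^{\otimes n}$, using explicit matrix coefficient bounds for the principal and complementary series of $\SLR$ per factor. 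The zero frequency, after summation over $(a,q)$, reproduces the singular series and singular integral of the standard Birch expansion, yielding a main term proportional to $C_F P^{n-2}\int_M f\,d\mu_G$, which vanishes by the reduction.

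The principal obstacle, and the source of the explicit threshold $n\ge 481$, is the requirement that the $\y$-equidistribution be quantitatively effective \emph{without} dependence on the spectral gap of $\Gamma_0$. Following \cite{Tanis_Vishe17} and \cite{FFT16}, I would exploit the quadratic oscillation $e(\beta F(q\y+\vecb))$ together with the Gauss-sum averaging over $\vecb\bmod q$ to absorb those portions of the per-factor matrix coefficient bounds that would otherwise carry a complementary-series penalty, replacing them by gap-free but weaker polynomial estimates. The constant $481$ then emerges from balancing three competing losses: the Weyl gain on $\mathfrak m$, the combined $(q,\beta,\vecb)$-uniform loss on $\mathfrak M$, and the product-form error accumulated across the $n$ factors of $\SLR$ by the spectral-gap-free equidistribution. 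Ensuring that all three jointly beat $P^{n-2}$ is precisely what forces $n$ to be this large, and calibrating the exponents through every step --- rather than the circle method itself --- is where I expect the main technical difficulty to lie.
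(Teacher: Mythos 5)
Your high-level plan (circle method plus quantitative horospherical equidistribution, with a reduction to smooth zero-average $f$) does match the paper's skeleton, but several of the steps as you describe them would actually fail, and the single most important new idea in the paper is absent. First, reducing to \emph{$K$-finite} $f$ is not enough; the proof needs $f$ (and the cutoff $w$) to be a finite sum of \emph{factorisable} functions $f(g_1,\dots,g_n)=\prod_i f_i(g_i)$, precisely so that after Poisson summation the $n$-dimensional oscillatory integrals split into $n$ one-dimensional twisted horocyclic integrals, each of which can then be fed into the Flaminio--Forni--Tanis bound. $K$-finiteness does not give this tensor-product structure. Second, on the minor arcs the ``trivial bound $\|f\|_\infty^2$'' on the differenced twist cannot work: after Cauchy--Schwarz the inner sum is $\sum_\x w_\h(\x/P)\,f_\h(u(\x)x_0)\,e(2(\alpha L\h)\cdot\x)$, and $f_\h$ still oscillates in $\x$, so it cannot be pulled out as a constant; the paper instead applies the twisted-average estimate (Lemma \ref{lem:fsum}, via \cite{FFT16}) to each one-dimensional factor of $f_\h$, after controlling $\|f_{i,h_i}\|_{\L^2_k}$ in terms of $|h_i|$ and $\|f_i\|_{\L^\infty_k}$.

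Third and most seriously, using ``explicit matrix coefficient bounds for the principal and complementary series'' on the major-arc side would reintroduce exactly the dependence on the spectral gap that the theorem is designed to avoid --- the decay rate of complementary-series matrix coefficients is the spectral gap. The paper's gap-free input is the specific $|T|^{5/6}(1+|c|^{-1/6})$ bound for twisted horocyclic integrals of \cite[Theorem 1.1]{FFT16}, encoded in Lemma \ref{lem:FFT}. Moreover, and this is the ingredient your proposal is missing entirely, neither van der Corput differencing nor any Gauss-sum averaging suffices in the mid-range of $(q,|z|)$: the paper introduces a genuinely new degree-lowering device (Lemma \ref{lem:splitbound}), writing $\x=\z+N\y_0$ with $N\approx|z|^{-1/2}$ so that $zF(\z)$ is bounded, Taylor-expanding $e(zF(\z))$ to a polynomial, and only then applying Poisson summation; this removes the quadratic phase from the exponential integral and is what allows the $(q^{n/2}|z|^{n/6}+q^{-n/2}(1+|Pz^{1/2}|)^{-n/6})$ saving of \eqref{eq:boundminor1}. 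Without this lemma the estimates do not close for $n$ in any reasonable range, and the constant $481$ --- which comes from balancing the split-bound, the van der Corput exponent $1/228$, and the residual term $P^{-1/240}$ --- cannot be reached by Weyl differencing and matrix coefficients alone.
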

There are two main highlights of this result. Firstly, the equidistribution is established for {\em every} $x_0\in G/\Gamma$. Secondly, our bound for $n$ is independent of the spectral gap of $G_0/\Gamma_0$. We have also not tried to optimise the lower bound $481$ appearing here. It is likely that our bounds may be improved slightly to be able to obtain a better result. A natural limit of the process here would be $n\geq 457=2\times 228+1$, which arises from the second term in our van der Corput bound \eqref{eq:boundminor2}. We must highlight that the situation considered here is significantly different than that of $(\RR/\ZZ)^n$ or the usual circle method setting which leads to the asymptotic formula in \eqref{eq:NFPdef}. Here, we need to consider exponential sums of the type
\begin{equation}\label{eq:Szdef}
S(\alpha):=\sum_{\substack{\x\in\ZZ^n}}w(\x/P)f(u(\x)x_0)e(\alpha F(\x)),
\end{equation}
where $w$ is a suitable compactly supported function on $\RR^n$ and $\alpha$ is a real number. Notice that the extra factor $f(u(\x)x_0)$ appearing here is highly oscillatory, and due to this, the usual analytic techniques break down. We therefore need to lower the degree of $F$ using some sort of differencing, and then use bounds for twisted averages of functions along horocycles. One way to do so is to use van der Corput differencing, which hands us exponential integrals of differenced functions (see \eqref{eq:fhdefn}); which we follow up by applying uniform bounds for twisted horocyclic averages in \cite[Theorem 1,1]{FFT16}, which require taking large, in fact $7+\ve$, number of derivatives of these functions. The final optimisation therefore amounts to a loss by a factor of size $14+\ve$. However, unfortunately, this bound is not enough and we need to invent a new technique, namely,  our alternate bound in Lemma \ref{lem:splitbound}. For further explanation of this technique, we refer the reader to the explanation given following the statement of Theorem \ref{thm:Main thm}. As far as our knowledge, the differencing technique used to obtain Lemma \ref{lem:splitbound} has not yet been used in this setting before. It would be interesting if this bound can be modified to be made to work in the whole {\em minor arcs} regime. In this case, one may be able to obtain the result using much lower number of variables. However, it should be noted that using analytic methods, one may not expect a result as good as $n\geq 5$. Unless one improves upon the work of Flaminio, Forni and Tanis \cite{FFT16}, one would at least require $2\times 6+1=13$ variables (or more realistically $2\times13=26$ variables to allow for a typical loss arising due to differencing).

Theorem \ref{thm:main thm1} can also be seen as a {\em mixing} type result. If $F$ were a diagonal form instead, then as noted by Ubis in a private communication, a simple H\"older inequality type argument applied to the exponential sums in the spirit of techniques used in ternary Goldbach conjecture (see \cite[Lemma 19.4]{IwaKowa04})  can directly establish Theorem \ref{thm:main thm1} as soon as $n\geq 5$. A sketch of this argument will be produced in Remark \ref{rk:diagonal}. The assumption that $\Gamma_0$ is co-compact could also be removed with some more technical work, using finer results in \cite[Theorem 1.1]{FFT16}. We believe that the method in this paper can be suitably modified to obtain a version of Ubis' result \cite{Ubis17} independent of the spectral gap.  In this case, possibly a variant of Lemma \ref{lem:splitbound} itself could be made to work which may lead to requiring a relatively few number of variables.

The strategy used in this paper is rather {\em soft} and is capable of establishing a much more general result than the one stated here. For example, let $X$ be a probability space, and let $U$ be an $n$-dimensional measure preserving flow on $X$, then techniques here may be used to establish the equidistribution of discrete sparse subsets $\{U(\x)x_0:\x\in\ZZ^n\cap\{F(\x)=0\}\}$  (or of its continuous version a.k.a. \cite{Ubis17}), for every $x_0$, as long as one has an effective bound for the twisted averages of the flow $U$ on $X$ and that the dimension $n$ of the flow is {\em large enough}. One relevant application could be to the case where $U$ denotes a full dimensional horospheric flow on $X=\SL(n,\RR)/\SL(n,\ZZ)$ as long as $n$ is large enough with respect to the degree $d$ of the polynomial $F$.

We now move on to the statement of Theorem \ref{thm:Main thm}, our main tool in proving Theorem \ref{thm:main thm1}. Let $x_0$ be an arbitrary fixed point in $G/\Gamma$. In order to use Fourier analytic tools effectively, given any parameter $P\geq 1$, given any $f\in\C^\infty(M)$ and any compactly supported function $w\in\C^\infty_c(\RR^n)$, we will consider the following smooth average 
\begin{equation}\label{eq:sigdef}
\Sigma(P)=\sum_{\substack{\x\in\ZZ^n\\ F(\x)=0}}w(\x/P)f(u(\x)x_0).
\end{equation}
Our main tool for proving Theorem \ref{thm:main thm1} will be provided by Theorem \ref{thm:Main thm} below. It establishes an effective bound for the smooth sum $\Sigma(P)$ for any smooth factorisable function $f\in \C^\infty(G/\Gamma)$ of zero average and a suitably chosen factorisable function $w\in\C^\infty_c((-1,1)^n)$. More explicitly, in Theorem \ref{thm:Main thm}, we assume that $f$ is of the form
\begin{equation}\label{eq:fprod}f(g_1,...,g_n)=\prod_{i=1}^{n} f_i(g_i),\textrm{ where }f_i\in\C^\infty(G_0/\Gamma_0)\textrm{ and }\int_{M_0} f_1(g_1)d\mu_{G_0}(g_1)=0.
\end{equation}
Here, $g_i\in G_0/\Gamma_0$. 

Similarly, we will work with factorisable functions on $\RR^n$. Let $\omega\in \C^\infty_c(\RR)$ be a smooth compactly supported function on $\RR$, whose support is contained in $(-1,1)$ and let \begin{equation}\label{eq:wdef}
w(\x):=\prod_{i=1}^n \omega(x_i).\end{equation}
The implied constants in our final bounds may depend on the measure of the support of $\omega$. The fact that $\omega$ is supported in $(-1,1)$ is only assumed to simplify this dependence in Theorem \ref{thm:Main thm}.

Before we give the statement of Theorem \ref{thm:Main thm}, we must set some notation for various Sobolev norms appearing there. For any function $w(\x)$ in $\C^\infty(\RR^n)$, any $k\in\ZZ_{\geq0}$ and any real number $p\in[1,+\infty]$,
we introduce the standard Sobolev norm $S_{p,k}(w)$, taking values in $\R_{\geq0}\cup\{+\infty\}$, through
\begin{align}\label{eq:normdef}
S_{p,k}(w)=\sum_{j=0}^k\sum_{|\beta|=j}\|\partial_{\x}^\beta w(\x) \|_{\L^p}.
\end{align} 
Here, given $\beta=(k_1,...,k_n)\in\ZZ_{\geq 0}^n$, let $|\beta|=k_1+...+k_n$, and let $\partial_{\x}^\beta:=\partial^{k_1}_{x_1}...\partial^{k_n}_{x_n}$. 

Our norms for functions $f\in \C^\infty(G/\Gamma)$ will be analogous and standard. Let $\{Y, X, Z\}$ be a basis for the Lie algebra $\sll$ given by, 
\begin{equation}
\label{eq:lieder}
Y = \left(\begin{array}{rr}1/2 & 0 \\0 & -1/2\end{array}\right), \ \ X = \left(\begin{array}{rr}0 & 1 \\0 & 0\end{array}\right), \ \  Z = \left(\begin{array}{rr}0 & 0 \\1 & 0\end{array}\right).
\end{equation}
Given $p\in [1,\infty]$ and $k\in\ZZ_{\geq 0}$, by $\|f\|_{\L^p_k}$ we denote the sums of $\L^p$ norms of upto ``$k$-derivatives'' of $f$. To formalise this, let $\mathcal O_k$ be a collection of vectors $D:=(D_1,...,D_n)$, where each co-ordinate $D_i$ is a monomial in
$\{Y_i, X_i, Z_i\}$ such that the total order of all these monomials is at most $k$. Here, $X_i$ (and analogously $ Y_i$ and $Z_i$) denotes the element in the lie algebra of $G/\Gamma$ which contains $X$ in the $i$-th co-ordinate and zero everywhere else, i.e., $X_i=(0,..,0,X,0,...).$ Then we define
\begin{equation}
\label{eq:normdef1}
\|f\|_{\L^p_k} := \sum_{D=(D_1,...,D_n) \in \mathcal O_k} \|D f\|_{\L^p},
\end{equation}
where $Df:=D_1D_2...D_n f$. Upon interpolation as in \cite{Lions}, the above norms can be extended to hold for all $k\in\RR_{\geq 0}$.

We are now set to state Theorem \ref{thm:Main thm}:
\begin{theorem}
\label{thm:Main thm}
There exists an absolute constant $\gamma_0:=\gamma_0(\Gamma)$ such that given any non-singular quadratic form $F(\x)\in\ZZ[x_1,...,x_n]$, any $P\geq 1$, given any $w(\x)\in \C^\infty_c((-1,1)^n)$ satisfying \eqref{eq:wdef} and any $f\in\C^\infty(G/\Gamma)$ satisfying \eqref{eq:fprod}, we have
\begin{equation*}
\left|\Sigma(P)\right|\ll S_{\infty,9n}(w)\|f\|_{\L^\infty_{9n+1}}P^{n-2-\gamma_0},
\end{equation*}
as long as $n\geq 481$.
\end{theorem}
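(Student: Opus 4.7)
The plan is to apply the Hardy--Littlewood circle method to detect the constraint $F(\x)=0$: writing
$$\Sigma(P)=\int_0^1 S(\alpha)\,d\alpha$$
with $S(\alpha)$ as in \eqref{eq:Szdef} (weighted by $w(\x/P)$), dissect $[0,1]$ into major and minor arcs by a Farey-type decomposition, and aim for a bound of the shape $|\Sigma(P)|\ll P^{n-2-\gamma_0}$ by saving a power of $P$ over the trivial estimate $|S(\alpha)|\le \|f\|_{\L^\infty}P^n$ on each piece.

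On the major arcs, where $\alpha=a/q+\beta$ with $q$ small and $|\beta|$ tiny, split $\x$ into residue classes modulo $q$ and apply Poisson summation to each class. The sum reduces, up to a Gauss-type factor in $q$, to smooth oscillatory integrals of $f(u(\x)x_0)$ against $e(\beta F(\x))$ on scale $P$. By the factorisation \eqref{eq:fprod} and the vanishing hypothesis $\int_{M_0} f_1\,d\mu_{G_0}=0$, the first-coordinate factor of each such integral is a (possibly twisted) smoothed horocycle average of a mean-zero function on $G_0/\Gamma_0$, and the effective twisted-horocycle bound of Flaminio--Forni--Tanis \cite[Theorem 1.1]{FFT16} supplies a power saving there. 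Combined with standard Gauss-sum estimates and the smallness of the major-arc measure, this yields the required bound on the major arcs.

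On the minor arcs, apply van der Corput differencing in $\x$ at a scale $1\le H\le P$, which bounds $|S(\alpha)|^2$ by
\begin{equation*}
\frac{P^n}{H^n}\sum_{\h\in\ZZ^n,\,|\h|\le H}\Big|\sum_{\x\in\ZZ^n}w_{\h}(\x/P)\,f_{\h}(u(\x)x_0)\,e(2\alpha\x^tL\h)\Big|+\text{error},
\end{equation*}
where $f_{\h}(g):=f(g)\overline{f(u(\h)g)}$ (cf.\ \eqref{eq:fhdefn}) and $w_{\h}$ is a correlation of $w$. The surviving phase is linear in $\x$, so the inner sum factorises coordinatewise into twisted horocyclic averages of frequency $2\alpha L\h$. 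For shifts $\h$ such that this frequency avoids integers in every coordinate, a further application of \cite[Theorem 1.1]{FFT16} yields a power saving at the price of $7+\ve$ derivatives per coordinate, which accounts for the bulk of the $9n$ Sobolev derivatives required in the statement.

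The principal obstacle is the complementary regime: those shifts $\h$ for which $2\alpha L\h$ lies too close to integers in some coordinates, so that the Flaminio--Forni--Tanis bound becomes ineffective. Counting such $\h$ and optimising $H$ against the van der Corput factor of $2$ saturates at the natural threshold $n\ge 457$ discussed after Theorem~\ref{thm:main thm1}, coming from the second term in \eqref{eq:boundminor2}. To pass beyond this threshold one invokes the alternative estimate of Lemma~\ref{lem:splitbound}: partitioning the $\x$-sum along a suitable sub-lattice on which the residual linear phase trivialises, the bad contribution is rewritten as an untwisted smoothed average of $f_{\h}$ along an $n$-dimensional horospherical orbit, controlled directly by the mean-zero hypothesis on $f_1$ and effective equidistribution. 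Balancing the major-arc bound, the van der Corput plus Flaminio--Forni--Tanis bound, and the alternative bound from Lemma~\ref{lem:splitbound}, and optimising in $H$ and the arc cutoff, produces the final saving $P^{-\gamma_0}$ under $n\ge 481$.
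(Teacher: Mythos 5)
Your overall skeleton is right (circle method, Dirichlet dissection, van der Corput differencing, Flaminio--Forni--Tanis as the source of power saving, supplemented by Lemma~\ref{lem:splitbound}), but there are two genuine gaps in how you propose to carry it out.

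First, your major-arc treatment does not work as stated. After splitting into residue classes mod $q$ and applying Poisson summation, you claim the resulting oscillatory integrals factor coordinatewise into twisted horocycle averages. But the remaining phase is $e(\beta F(\x))$ with $F$ a \emph{quadratic} form that is not assumed diagonal, so the integrand has cross terms $x_ix_j$ and does not split as a product over coordinates. Even on the major arcs one has only $|\beta F(\x)|\ll q^{-2}P^{\ve_0}$, which is not $O(1)$, so the phase cannot simply be absorbed. The paper instead derives the major-arc bound \eqref{eq:boundmajor} by van der Corput differencing \emph{also} on the major arcs: the differenced phase $e(2\alpha\x^t L\h)$ is genuinely linear and factors coordinatewise, and one then applies the spectral-gap variant \eqref{eq:fsum2} of Lemma~\ref{lem:fsum} to just the first coordinate (exploiting the zero-average hypothesis on $f_1$) while estimating the other $n-1$ coordinates trivially. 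That single saving, multiplied by the small measure of $\fm_1$, already gives the major-arc bound.

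Second, you have misread the role and mechanism of Lemma~\ref{lem:splitbound}. It is not a device for handling the ``bad'' shifts $\h$ inside the van der Corput argument, nor does it produce an untwisted average of $f_\h$. It is a \emph{separate} bound on $S(\alpha)$ itself, with no differencing at all. The key move there is to split $\x=\z+N\y_0$ at scale $N\approx |z|^{-1/2}P^{-\ve}$ so that $|zF(\z)|\ll P^{-\ve}$ on the support, then Taylor-expand $e(zF(\z))$ into polynomial pieces; each piece has a phase that is \emph{linear} in $\z$ after Poisson summation mod $q$, so the resulting integrals do factor coordinatewise and Lemma~\ref{lem:fsum} applies. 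The sum over $\y_0$ is then controlled by the lattice-counting Lemma~\ref{lem:fdisc}. The final minor-arc estimate comes from taking the geometric mean of \eqref{eq:boundminor1} and \eqref{eq:boundminor2}; the van der Corput bound alone would require the spectral gap in the mid-range of $z$, which is exactly what Lemma~\ref{lem:splitbound} is designed to avoid. As written, your proposal would not produce a bound independent of the spectral gap in that regime.
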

As before, we have not tried to optimise the Sobolev norms as well as the number $481$ appearing in Theorem \ref{thm:Main thm}.

Let us give an overview of the method that will be used to prove Theorem \ref{thm:Main thm}. The main tool here will be provided by the Hardy-Littlewood circle method. Given any $m\in\ZZ$, let
$$
\int_0^1 e(mz)dz =
\begin{cases}
1,  & \mbox{if $m = 0$}, \\
0,  & \mbox{otherwise},
\end{cases}
$$
denote the delta function detecting when an integer $m=0$. Here, $e(\alpha)=\exp(2\pi i \alpha)$, a standard notation. Using this, we start by rewriting $\Sigma(P)$ as
\begin{equation}\label{eq:sigdef1}
\Sigma(P)=\int_{0}^{1}S(\alpha)d\alpha,
\end{equation}
where, $S(\alpha) $ is as defined in \eqref{eq:Szdef} 
is an exponential sum. Typically, one needs to estimate $S(\alpha)$ at $\alpha=a/q+z$, where $|z|<q^{-2}$. One of our key ingredients in removing the dependence on the spectral gap is provided by the uniform bounds for twisted averages appearing in \cite{FFT16} and \cite{Tanis_Vishe17}. When $q$ is large or when $z$ is very small ($|z|\leq q^{-2}P^{-2+o(1)}$), we use van der Corput differencing to lower the degree of $F$ along with the bounds in \cite{FFT16}, which would hand us Lemma \ref{lem:van der}. This bound itself is unfortunately not enough to remove the dependence on the spectral gap when $q,|z|$ are {\em mid-range}. Here, we use a novel degree lowering technique. Namely, we split the sum over $\x$ in \eqref{eq:Szdef} as $\x=\x_1+N\x_2$ where $N$ is approximately of size $ |z|^{-1/2} $. This choice means that the term $zF(\x_1)$ is bounded. For a fixed value of $\x_2$, we then consider the sum over $\x_1$. This trick allows us to lower the degree of $F$ in the exponential integral which typically arises after applying Poisson summation. This is the essence of Lemma 
\ref{lem:splitbound}.

Let us briefly compare our work with that of Ubis \cite{Ubis17}. The key bound in \cite{Ubis17} uses van der Corput differencing to bound the exponential integrals, which is analogous to the bound \eqref{eq:boundmajor} of Lemma \ref{lem:van der} here. Here, we must point out that the hypersurface $F(\x)=0$ is of co-dimension one, and therefore, the $n-1$ dimensional volume of the set $\{F(\x)=0:|\x|<P\}\sim P^{n-1}$. In this paper however, we are averaging over a sparser subset in this manifold as demonstrated by the counting estimate \eqref{eq:NFPdef}. This is one philosophical reason behind why we need to establish the bound in Lemma \ref{lem:splitbound} and why this problem is significantly harder to tackle.
\subsection{Acknowledgements} The author had the inspiration for this project while discussing the aforementioned work of Ubis \cite{Ubis17} with Kevin Hughes. We are thankful to him for introducing us to this work. It has also been very helpful to get the input of Kevin Hughes, Asaf Katz and Adri\'an Ubis on an earlier version of this paper. Their contribution is greatly acknowledged. The simpler argument in the diagonal case was pointed out to us by Ubis, and we thank him for this as well.

\section{Auxiliary results}
In this section, we will gather together various auxiliary lemmas necessary for us. 

\subsection{Bounds for smooth twisted horocyclic averages on $M_0=\SLR/\Gamma_0$} Here, for the sake of avoiding the complication of introducing a separate notation, throughout this section, given $\omega\in\C^\infty_c(\RR)$, and $f\in\C^\infty(M_0)$, we will use the same notations $S_{p,k}(\omega)$ and $\|f\|_{\L^p_k}$ to denote the corresponding Sobolev norms. These can be seen to be equal to those in \eqref{eq:normdef} and \eqref{eq:normdef1} in the special case when $n=1$.

The first result to be obtained below is a smooth variant of a twisted averages result \cite[Theorem 1.1]{FFT16}. 
\begin{lemma}\label{lem:FFT}
Let $\omega\in \C^\infty_c(a,b)$ be a smooth, compactly supported function on $\RR$, let $f\in \C^\infty(M_0)$ be a function of zero average and let $x_0$ be any point in $M_0$. Then there exists $0<\gamma<1$ depending only on the spectral gap of $M_0$ such that given any $P>1$, any $\ve>0$, and any $c\in\RR$, we have
\begin{equation}
\label{eq:aux1}
\begin{split}
&P^{-1}\left|\int \omega(t/P)f(u_0(t)x_0)e(ct)dt\right|\\&\ll_{N,\ve} (1+|b-a|)\log^{1/2}(P)S_{1,1}(\omega)\min\{\|f\|_{\L^2_{7+\ve}}|P|^{-1/6}(1+|c|^{-1/6}),\|f\|_{\L^2_{3+\ve}}|P|^{-\gamma}\}.
\end{split}
\end{equation}
\end{lemma}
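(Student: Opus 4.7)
The plan is to deduce Lemma~\ref{lem:FFT} from the sharp-cutoff twisted horocyclic average bound of \cite[Theorem~1.1]{FFT16} by a single integration by parts against the smooth weight $\omega(t/P)$. First, I translate the variable of integration to start at the origin: after the substitution $t = aP + s$ the integral becomes
\begin{equation*}
e(caP)\int_0^Q \tilde\omega(s/P)\, f(u_0(s)y_0)\, e(cs)\, ds, \qquad Q := (b-a)P,
\end{equation*}
with $y_0 := u_0(aP)x_0$ and $\tilde\omega(u) := \omega(a+u)$ supported in $(0,b-a)$. Since $S_{1,1}(\tilde\omega) = S_{1,1}(\omega)$ and the bounds of \cite{FFT16} are uniform in the base point, this preliminary move is cost-free.

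Second, set $H(T) := \int_0^T f(u_0(s)y_0)\, e(cs)\, ds$. The sharp-cutoff form of \cite[Theorem~1.1]{FFT16} yields, for any $T \geq 1$ and any $\varepsilon > 0$,
\begin{equation*}
|H(T)| \ll_\varepsilon \log^{1/2}(T)\, \min\Bigl\{\|f\|_{\L^2_{7+\varepsilon}}(1+|c|^{-1/6})\, T^{5/6},\ \|f\|_{\L^2_{3+\varepsilon}}\, T^{1-\gamma}\Bigr\},
\end{equation*}
where $\gamma = \gamma(\Gamma_0) > 0$ depends on the spectral gap; the first estimate is uniform in the spectral gap (the crucial feature of \cite{FFT16}), while the second incorporates it. Since $\tilde\omega$ vanishes at $0$ and at $b-a$, integrating by parts in $s$ gives
\begin{equation*}
\int_0^Q \tilde\omega(s/P) H'(s)\, ds = -\frac{1}{P}\int_0^Q \tilde\omega'(s/P) H(s)\, ds,
\end{equation*}
whose absolute value is at most $P^{-1}\sup_{0\leq s\leq Q}|H(s)| \cdot \int_0^Q|\tilde\omega'(s/P)|\, ds = \sup|H|\cdot \|\omega'\|_{\L^1} \leq \sup|H|\cdot S_{1,1}(\omega)$. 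Evaluating $\sup|H|$ via the displayed inequality at $T = Q$, substituting $Q = (b-a)P$, dividing through by $P$, and using the elementary estimates $\log^{1/2}((b-a)P) \ll (1+|b-a|)\log^{1/2}(P)$ and $(b-a)^{5/6},\ (b-a)^{1-\gamma} \ll 1+|b-a|$ yields the asserted inequality.

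The main obstacle is producing the sharp-cutoff estimate above in the precise form required: if \cite[Theorem~1.1]{FFT16} is phrased for a smooth cutoff, one must either approximate the characteristic function of $[0,T]$ by a unit bump (absorbing at worst an additional $\log^{1/2}(T)$, which is harmless here) or adapt the proof of loc.~cit.\ directly. A secondary technical point is ensuring that both branches of the minimum survive the same integration-by-parts scheme, which they do because only pointwise suprema of $H$ enter; keeping track of the Sobolev orders $7+\varepsilon$ and $3+\varepsilon$ is then automatic from the input.
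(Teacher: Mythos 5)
Your proposal is correct and takes essentially the same route as the paper: a single integration by parts against the smooth weight, reducing to the sharp-cutoff twisted horocyclic average $H(T)$, which is then bounded via \cite[Theorem~1.1]{FFT16}. The only cosmetic differences are that you translate the base point to set $y_0 = u_0(aP)x_0$ and then take $\sup_{0\le T\le Q}|H(T)|$, whereas the paper keeps the factor $|t-aP|^{5/6}$ inside the integral against $\omega'$; both yield the same $|b-a|^{5/6}S_{1,1}(\omega)$ factor. On the point you flag as the main obstacle, the paper indeed does not use a single combined ``min'' input but splits on $|cP|\gtrless e$, applying \cite[Theorem~1.1, Eq.~(5)]{FFT16} in the first case and the weaker spectral-gap bound from the bottom of p.~1361 of \cite{FFT16} in the second; this gives each branch of the minimum in the complementary regime where the other branch is trivial by Sobolev embedding.
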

\begin{proof}
We begin by applying integration by parts to obtain
\begin{align*}
\int \omega(t/P)f(u_0(t)x_0)e(ct)dt=P^{-1}\int_{aP}^{bP} \omega'(t/P)\int_{aP}^t f(u_0(z)x_0)e(cz)dzdt.
\end{align*}
When $|cP|>e$, an application of \cite[Theorem 1.1, Equation (5)]{FFT16} to the inner integral on the right hand side of the above equation implies that this term is
\begin{equation}\label{eq:casse1}
\begin{split}
&\ll P^{-1}\log(P)^{1/2}\int_{aP}^{bP} |\omega'(t/P)|(1+|c|^{-1/6})|t-aP|^{5/6}dt\\
&\ll \|f\|_{\L^2_{7+\ve}} P^{5/6}\log(P)^{1/2}\int_{a}^{b} |\omega'(t)|(1+|c|^{-1/6})|t-a|^{5/6}dt\\ &\ll |b-a|^{5/6} S_{1,1}(\omega)\log(P)^{1/2}P^{5/6}\|f\|_{\L^2_{7+\ve}}(1+|c|^{-1/6}).
\end{split}
\end{equation}
On the other hand, when $|cP|<e$, an application of a weaker bound obtained at the bottom of \cite[Page 1361]{FFT16} hands us a constant $\gamma$, depending on the spectral gap of $M_0$, such that 
\begin{equation}
|\int_{aP}^t f_0(u_0(z)x_0)e(cz)dz|\ll \|f\|_{\L^2_{3+\ve}}|t-aP|^{1-\gamma}
\end{equation} giving the remaining bound in \eqref{eq:aux1}, after following the same steps as in the derivation of \eqref{eq:casse1} and further noting that $|b-a|^{1-\gamma}+|b-a|^{5/6}\ll 1+|b-a| $.
\end{proof}
Note that the explicit dependence on $|b-a|$ in Lemma \ref{lem:FFT} is not necessary for our applications. While applying, our function $\omega$ will be assumed to be supported in an interval of size $\ll 1$. We now focus our attention to estimating averages of smooth twisted averages:
\begin{lemma}
\label{lem:fsum}Given any $f\in\C^\infty(M_0)$, any $x_0\in M_0$, any $1\leq q\leq P$, any function $\omega\in \C^\infty_c(a,b)$, any $c\in\RR$, and any $\ve>0 $ we have
\begin{equation}
\label{eq:fsum1}
\begin{split}
&\sum_{v\in\ZZ}|\int\omega(y/P)f(u_0(y)x_0)e((c-v/q)y)dy|\\&\ll (1+|b-a|)\log^{1/2}(P) \|f\|_{\L^2_{9+\ve}}S_{1,3}(\omega)P((1+\|qc\|P/q)^{-1/6}+qP^{-1/6}),
\end{split}
\end{equation}
where $\|x\|$ denotes the distance of a real number $x$ to the nearest integer.

Moreover, if $\int_{M_0} f(x)d\mu_{G_0}(x)=0$, then we may further have
\begin{equation}
\begin{split}
\label{eq:fsum2}
&\sum_{v\in\ZZ}|\int\omega(y/P)f(u_0(y)x_0)e((c-v/q)y)dy|\\&\ll (1+|b-a|)\log^{1/2}(P) \|f\|_{\L^2_{9+\ve}}S_{1,3}(\omega)P(P^{-\gamma}+qP^{-1/6}),\end{split}
\end{equation}
where $\gamma$ is the constant appearing in the statement of Lemma \ref{lem:FFT}.
\end{lemma}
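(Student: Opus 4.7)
The plan is to isolate the single $v$ that minimises $|c - v/q|$ and handle the remaining $v$ by combining Lemma \ref{lem:FFT} with integration by parts. Let $v_0 \in \ZZ$ be closest to $qc$ and set $c_v := c - v/q$, $I(c_v) := \int \omega(y/P) f(u_0(y)x_0) e(c_v y)\,dy$. Then $|c_{v_0}| = \|qc\|/q$ and for $v \ne v_0$ one has $|c_v| \geq |v - v_0|/(2q)$, so the sum $\sum_v |I(c_v)|$ naturally splits into a resonant piece ($v = v_0$), an intermediate range ($0 < |v - v_0| \leq 2q$, i.e.\ $|c_v| \leq 1$) and a tail ($|v - v_0| > q$, i.e.\ $|c_v| > 1$).

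For $v = v_0$ I apply Lemma \ref{lem:FFT} directly. The elementary identity $P^{5/6}(1+|c_{v_0}|^{-1/6}) = P^{5/6} + P(\|qc\|P/q)^{-1/6}$ extracts the required term $P(1 + \|qc\|P/q)^{-1/6}$ of \eqref{eq:fsum1}, while the leading $P^{5/6}$ piece is dominated by the $qP^{5/6}$ produced in the tail below (recall $q \geq 1$) and can be absorbed there. For the zero-average refinement \eqref{eq:fsum2}, I instead use the alternate estimate $|I(c_{v_0})| \ll P^{1-\gamma}\|f\|_{\L^2_{3+\ve}}$ from Lemma \ref{lem:FFT}, yielding the $P\cdot P^{-\gamma}$ term outright.

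For $v \neq v_0$ with $|c_v| \leq 1$ (so $m:=|v-v_0|\leq 2q$), I apply Lemma \ref{lem:FFT} termwise; since $|c_v|^{-1/6} \ll (q/m)^{1/6}$, the elementary summation
\[
\sum_{m=1}^{\lfloor 2q\rfloor}\bigl(1+(q/m)^{1/6}\bigr) \ll q + q^{1/6}\cdot q^{5/6} \ll q
\]
delivers the required budget $\ll qP^{5/6}$ at the Sobolev cost $S_{1,1}(\omega)\|f\|_{\L^2_{7+\ve}}$. For $|c_v| > 1$ (so $m > q$) I integrate by parts twice. Using that $X$ is the infinitesimal generator of $u_0$, so $\frac{d^k}{dy^k} f(u_0(y)x_0) = (X^k f)(u_0(y)x_0)$, Leibniz expansion gives
\[
I(c_v) = \frac{1}{(-2\pi c_v)^2}\sum_{j=0}^{2}\binom{2}{j}P^{-j}\int \omega^{(j)}(y/P)(X^{2-j}f)(u_0(y)x_0) e(c_v y)\,dy.
\]
Each of the three resulting integrals is bounded by Lemma \ref{lem:FFT}, producing a factor $P^{5/6}(1+|c_v|^{-1/6}) \leq 2P^{5/6}$ on this range and raising the relevant norms to $S_{1,3}(\omega)$ and $\|f\|_{\L^2_{9+\ve}}$. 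Summing $\sum_{m>q}(q/m)^2 \ll q$ then yields $\ll qP^{5/6}$ for the tail, and combining the three contributions proves both estimates.

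The main care lies in the Sobolev bookkeeping: Lemma \ref{lem:FFT} already consumes $7+\ve$ derivatives of $f$ and one derivative of $\omega$, and the two integrations by parts required on the tail lift these to the target indices $9+\ve$ and $3$ exactly. One must also notice that the intermediate range cannot be handled by integration by parts (the decay $|c_v|^{-2}$ would produce $\sum_{m \leq q}(q/m)^2 \sim q^2$, which is too big), so Lemma \ref{lem:FFT}'s oscillation bound is genuinely needed there; conversely the tail cannot be handled by Lemma \ref{lem:FFT} alone, since $\sum_{|c_v|>1} 1$ diverges. Matching the two regimes at $|c_v| = 1$ and verifying the Abel-type sum $\sum_{m=1}^{q} m^{-1/6} \ll q^{5/6}$ are the only slightly delicate points.
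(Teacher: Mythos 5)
Your decomposition into the resonant term $v=v_0$, the intermediate range with $|c_v|\ll 1$, and the tail treated by two integrations by parts is exactly the paper's strategy (the paper's cutoffs are $|qc-v|<1/2$, $1/2\le|qc-v|<q$, $|qc-v|\ge q$), and the intermediate and tail estimates you give are correct. However, there are two genuine gaps.

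First, the treatment of the resonant term $v=v_0$ for \eqref{eq:fsum1} is incomplete. You write the Lemma \ref{lem:FFT} bound as $P^{5/6}+P(\|qc\|P/q)^{-1/6}$ and claim this ``extracts'' $P(1+\|qc\|P/q)^{-1/6}$, but $(\|qc\|P/q)^{-1/6}\not\ll(1+\|qc\|P/q)^{-1/6}$ in general: when $\|qc\|P/q<1$, the left side is unbounded (and diverges if $qc\in\ZZ$) while the right side is $\asymp 1$. To obtain the stated bound one must fall back, in the regime $\|qc\|P/q\lesssim 1$, on the \emph{second} branch of the $\min$ in Lemma \ref{lem:FFT}, namely $|I(c_{v_0})|\ll P^{1-\gamma}\|f\|_{\L^2_{3+\ve}}\ll P$ (or simply the trivial bound $|I|\ll P$), which is what the paper does in \eqref{eq:vsmall}. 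You do invoke the $P^{1-\gamma}$ branch, but only for \eqref{eq:fsum2}; it is also needed to close \eqref{eq:fsum1}.

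Second, \eqref{eq:fsum1} does not assume $\int_{M_0}f\,d\mu_{G_0}=0$, yet Lemma \ref{lem:FFT} applies only to zero-average $f$, so you cannot apply it directly to $I(c_v)$ as written. The paper handles this by writing $f=f_0+\int_{M_0}f\,d\mu_{G_0}$, running the above argument on the zero-average part $f_0$ (noting $\|f_0\|_{\L^2_k}\ll\|f\|_{\L^2_k}$), and estimating the contribution of the constant part by direct integration by parts on $\int\omega(y/P)e((c-v/q)y)\,dy$, which gives $\ll PS_{1,2}(\omega)(1+P\|qc\|/q)^{-1}$ after summing over $v$. Your proof silently assumes $f$ has zero average throughout, so this case is missing. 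Both gaps are straightforward to repair, but they are real; the rest of the argument (the $\sum_{m\le 2q}(q/m)^{1/6}\ll q$ bound, the $\sum_{m>q}(q/m)^2\ll q$ tail, and the Sobolev bookkeeping $\|X^{2-j}f\|_{\L^2_{7+\ve}}\ll\|f\|_{\L^2_{9+\ve}}$, $S_{1,1}(\omega^{(j)})\ll S_{1,3}(\omega)$) matches the paper.
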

\begin{proof}
 Let $S$ denote the sum under investigation, that is, let
\begin{equation}
\label{eq:Sdef}
S:=\sum_{v\in\ZZ}\left|\int\omega(y/P)f(u_0(y)x_0)e((c-v/q)y)dy\right|.
\end{equation}
We first begin by considering the special case when $\int_{M_0} f(x)d\mu_{G_0}(x)=0$, i.e., when $f$ is a zero average function.
When $q\leq |qc-v|$, we will apply integration by parts twice, followed by Lemma \ref{lem:FFT}, while in the range $1/2\leq |qc-v|<q$, Lemma \ref{lem:FFT} will be directly applied. To this end, given any non-negative integer $k$ and any $c_1\in\RR$, integration by parts $k$ times leads us to
\begin{equation}
\label{eq:intpart}
\begin{split}
&\left|\int\omega(y/P)f(u_0(y)x_0)e(c_1y)dy\right|\\&\ll_k |c_1|^{-k}\sum_{j=0}^k P^{-j}\left|\int\omega^{(j)}(y/P)(X^{k-j}f)(u_0(y)x_0)e(c_1y)dy\right|.
\end{split}
\end{equation}
Here, $X$ is as in \eqref{eq:lieder}, acts on $f$ via the explicit action $Xf(x):=\frac{\partial}{\partial_t}|_{t=0}f(u_0(t)x) $. Lemma \ref{lem:FFT} can now be employed to estimate the inner integrals on the right hand of the above expression to obtain
\begin{equation}\label{eq:c-vlarge}\begin{split}
&\left|\int\omega(y/P)f(u_0(y)x_0)e(c_1y)dy\right|\\&\ll_{N,\ve,k} (1+|b-a|) P\log^{1/2}(P)|c_1|^{-k} S_{1,k+1}(\omega)\min\{\|f\|_{\L^2_{7+k+\ve}}P^{-1/6}(1+|c_1|^{-1/6}),\|f\|_{\L^2_{3+k+\ve}}|P|^{-\gamma}\}.\end{split}
\end{equation}
When $|qc-v|\geq q$, we apply \eqref{eq:c-vlarge} with $k=2$ and $c_1=c-v/q$ and when $1/2\leq |qc-v|<q$, we again apply \eqref{eq:c-vlarge} with $k=0$ and $c_1=c-v/q$ to obtain
\begin{align}
\notag&((1+|b-a|)P\log^{1/2}(P))^{-1}\sum_{\substack{ v\in\ZZ\\|qc-v|\geq 1/2}}\left|\int\omega(y/P)f(u_0(y)x_0)e((c-v/q)y)dy\right|\\&\ll \|f\|_{\L^2_{9+\ve}}S_{1, 3}(\omega)P^{-1/6}\left(\sum_{\substack{ v\in\ZZ\\ 1/2\leq |qc-v|<q}}q^{1/6}|qc-v|^{-1/6}+\sum_{\substack{ v\in\ZZ\\q\leq|qc-v|}}q^2|qc-v|^{-2})\right)\notag
\\
&\ll \|f\|_{\L^2_{9+\ve}}S_{1,3}(\omega)P^{-1/6}q.\label{eq:c-vlarge2}
\end{align}
Similarly, when $|qc-v|=\|qc\|<1/2$, we will apply \eqref{eq:c-vlarge} with $k=0$ and $c_1=\|qc\|/q$ to obtain
\begin{equation}
\label{eq:vsmall}
\begin{split}
((1+|b-a|)P\log^{1/2}(P))^{-1}&\left|\int\omega(y/P)f(u_0(y)x_0)e(\|qc\|y/q)dy\right|
\\&\ll S_{1,1}(\omega)\left(\min\{\|f\|_{\L^2_{9+\ve}}|P/q|^{-1/6}\|qc\|^{-1/6},\|f\|_{\L^2_{3+\ve}}|P|^{-\gamma}\}\right).\end{split}
\end{equation}
Combing \eqref{eq:c-vlarge2} and \eqref{eq:vsmall} together, we establish the Lemma when $f$ is of zero average.

When $f$ is not of zero average, we start by writing $f=f_0+\int_{M_0} f(x)d\mu_{G_0}(x) $, where $f_0$ is now a function of zero average. Thus,
\begin{equation}
S\leq S_1+|\int f(x)d\mu_{G_0}(x)|S_2,
\end{equation}
where 
\begin{equation}
\begin{split}
S_1:=\sum_{v\in\ZZ}\left|\int\omega(y/P)f_0(u_0(y)x_0)e((c-v/q)y)dy\right| \textrm{ and }
S_2:=\sum_{v\in\ZZ}\left|\int\omega(y/P)e((c-v/q)y)dy\right|.
\end{split}
\end{equation}
$S_1$ can be bound by our analysis above. Note that $f=f_0+\int_{M_0} f(x)d\mu_{G_0}(x)$ is an orthogonal decomposition of $f$ with respect to the $\L^2$ norm, and therefore, for every $k\geq 0$, we must have $\|f_0\|_{\L^2_k}\ll \|f\|_{\L^2_k}$. As a result, $S_1$ can be bound by
\begin{equation}\label{eq:12}
S_1\ll (1+|b-a|)P\log^{1/2}(P)S_{1,3}(\omega)\|f\|_{\L^2_{9+\ve}}((1+\|qc\|P/q)^{-1/6}+qP^{-1/6}).
\end{equation}
On the other hand, the sum $S_2$ is simpler and can be bound via direct integration by parts using
\begin{align*}
&S_2=P\sum_{v\in\ZZ}\left|\int\omega(y)e(P(qc-v)y/q)dy\right|\\&\ll PS_{1,2}(\omega)((1+P\|qc\|/q)^{-1}+(P/q)^{-2}\sum_{|qc-v|\geq 1/2}|qc-v|^{-2})\ll PS_{1,2}(\omega)(1+P\|qc\|/q)^{-1}.
\end{align*}
Combining this bound with the one in \eqref{eq:12}, and further noticing that $|\int f(x) d\mu_{G_0}(x)|\leq \|f\|_{\L^2_{9+\ve}}$, we establish \eqref{eq:fsum1}.
\end{proof}

It should be noted that since $M_0$ is assumed to be compact, the bounds here are independent of the choice of $x_0$.

Let $f\in \C^\infty(M_0)$ be a smooth function. Given any $t\in\RR$, we will also need bounds for the Sobolev norms of the function $u_0(t)\cdot f(x):=f(u_0(t)x_0)$. In particular, we would like to make the dependence on $t$ more explicit. We recall the explicit action of the basis \eqref{eq:lieder} of the Lie algebra in \cite[eq (3.1)]{Tanis_Vishe17}, 
\begin{equation}\begin{split}X(u_0(t)\cdot f)&=u_0(t)\cdot(Xf)\\Y(u_0(t)\cdot f)&=u_0(t)\cdot((Y+tX)f)\\Z(u_0(t)\cdot f) &=u_0(t)\cdot((Z-2tY-t^2X)f).
\end{split}
\end{equation}
Using this explicit action, followed by induction, we are able to prove that for any monomial $X^{i_1}Y^{i_2}Z^{i_3}$, of order $k=i_1+i_2+i_3$, we must have
\begin{equation*}
X^{i_1}Y^{i_2}Z^{i_3}(u_0(t)\cdot f)=\sum_{D\in \scrO_k} p_D(t)D(u_0(t)\cdot f),
\end{equation*}
where $p_D$ are polynomials of degree at most $2k$, with integer coefficients only depending on $i_1,i_2$ and $i_3$. Summing over all such monomials, and using the fact that action of $u_0(t)$ preserves the $\L^2$ norms of functions, for any $s\in\ZZ_{\geq 0}$ we have
\begin{equation}
\label{eq:n(t)f Bound}
\|u_0(t)\cdot f\|_{\L^{2}_{s}}\ll |t|^{2s}\|f\|_{\L^2_s}.
\end{equation}
Upon interpolation, this bound can be extended to be true for all $s\in\RR_{\geq 0}$. 

\subsection{A lattice sum bound} In the proof of Lemma \ref{lem:splitbound}, we will need a bound for the following lattice sum, which we derive next:
\begin{lemma}\label{lem:fdisc}
Let $L$ be a fixed invertible $n\times n$ matrix with $\ZZ$ entries and  let  $1\leq P,H$ be real numbers satisfying $0\leq H\leq P$. Then, given any $0<|z|<1$, any $0<C$ and any $0<\delta<1$,
\begin{align*}
\sum_{0\leq y_i\leq P}\prod_{i=1}^{n}((1+H\|z(L\y)_i\|)^{-\delta}+C)\ll_{L} P^n\prod_{i=1}^{n}(1/P+|z|+H^{-\delta}+(H|z|P)^{-\delta}+C).
\end{align*}
\end{lemma}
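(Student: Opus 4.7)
The plan is to reduce the statement to a one-dimensional estimate via a linear change of variables, and then prove the one-dimensional bound by a dyadic decomposition plus counting argument in one regime and an integral comparison in the complementary regime.

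First I would perform the substitution $\w=L\y$. Since $L\in M_n(\ZZ)$ is invertible, this map is injective on $\ZZ^n$, and for $\y\in[0,P]^n$ we get $|w_i|\ll_L P$ for each $i$. Since every summand is nonnegative, we can drop the sublattice constraint $\w\in L\ZZ^n$ at the cost of an implicit constant depending on $L$, obtaining
\begin{equation*}
\sum_{0\leq y_i\leq P}\prod_{i=1}^n\bigl((1+H\|z(L\y)_i\|)^{-\delta}+C\bigr)\leq \sum_{\substack{\w\in\ZZ^n\\|w_i|\leq C_L P}}\prod_{i=1}^n\bigl((1+H\|zw_i\|)^{-\delta}+C\bigr).
\end{equation*}
Crucially the summand on the right now factorises across coordinates, so it suffices to prove the one-variable bound
\begin{equation*}
S(P')\,:=\,\sum_{\substack{w\in\ZZ\\|w|\leq P'}}\bigl((1+H\|zw\|)^{-\delta}+C\bigr)\,\ll_{L,\delta}\, P'\bigl(\tfrac{1}{P'}+|z|+H^{-\delta}+(H|z|P')^{-\delta}+C\bigr)
\end{equation*}
for $P'\ll_L P$; the $C$-contribution is trivially $\ll P'C$, so I can focus on $S_0(P'):=\sum_{|w|\leq P'}(1+H\|zw\|)^{-\delta}$.

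In the regime $|z|P'\geq 1$, I would apply a dyadic decomposition by setting
\begin{equation*}
T(\epsilon)\,:=\,\#\{w\in\ZZ:\,|w|\leq P',\,\|zw\|\leq\epsilon\},
\end{equation*}
so that $S_0(P')\ll_\delta \sum_{0\leq j\leq \log_2 H}2^{-j\delta}T(2^{j+1}/H)$. To bound $T(\epsilon)$, I parametrise each admissible $w$ by the nearest integer $k$ to $zw$: the number of relevant $k$ is $\ll 1+|z|P'$, and each yields at most $\ll 1+\epsilon/|z|$ admissible $w$, giving $T(\epsilon)\ll 1+|z|P'+\epsilon P'+\epsilon/|z|$. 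Summing the dyadic series produces
\begin{equation*}
S_0(P')\,\ll_\delta\, 1+|z|P'+P'H^{-\delta}+H^{-\delta}/|z|.
\end{equation*}
When $|z|P'\geq 1$ we have $H^{-\delta}/|z|\leq P'H^{-\delta}$ and $(H|z|P')^{-\delta}\leq H^{-\delta}$, so this matches the target. In the complementary regime $|z|P'\leq 1$ one has $\|zw\|=|zw|$ for $|w|\leq P'$, and a direct integral comparison yields
\begin{equation*}
S_0(P')\,\ll\, 1+\int_{-P'}^{P'}(1+H|z|u)^{-\delta}\,du\,\ll\, 1+\frac{(1+H|z|P')^{1-\delta}}{H|z|}\,\ll\, 1+P'(H|z|P')^{-\delta},
\end{equation*}
which supplies precisely the $P'(H|z|P')^{-\delta}$ term in the target. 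Combining the two regimes and taking the $n$-fold product over $i$ finishes the proof.

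The main obstacle I anticipate is that the dyadic counting argument naturally produces an extra term $H^{-\delta}/|z|$, which is only absorbed by the target bound when $|z|P\geq 1$; the complementary regime $|z|P\leq 1$ must be handled separately by an integral comparison to recover the $(H|z|P)^{-\delta}$ scale. The rest of the argument is a routine combination of change of variables and dyadic decomposition.
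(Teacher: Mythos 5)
Your proof reaches the correct conclusion and shares the paper's overall strategy (change variables $\w=L\y$, drop the sublattice constraint by positivity, factor into $n$ one-dimensional sums, then split on the size of $|z|P$), but the key step — bounding $\sum_{|w|\leq P'}(1+H\|zw\|)^{-\delta}$ when $|z|P'\gtrsim 1$ — is handled by a genuinely different device. The paper writes $z=1/N+z'$ with $N$ the integer nearest to $1/z$ (so $|z'|<1/N^2$), splits the summation variable into residues modulo $\lceil N/2\rceil$, and uses the fact that $\|r+zx\|\gg\|r+x/N\|$ for all but one $|x|<N/2$ together with the $1/N$-spacing of the points $x/N+r\bmod 1$ to reduce to the unshifted sum $\sum_{|x|<N/2}(1+H|x|/N)^{-\delta}$. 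You instead dyadically decompose on the size of $\|zw\|$ and bound the level sets $T(\epsilon)$ directly by parametrizing each admissible $w$ by the integer nearest to $zw$. Both routes are elementary and of comparable length; the paper's residue-splitting makes the approximate-rational structure of $z$ explicit and thus sits naturally alongside the circle-method framework used elsewhere, while your dyadic/level-set argument is somewhat more self-contained and portable to other problems of this shape.

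Two small technical slips in your write-up, neither fatal: first, $\|zw\|=|zw|$ requires $|z|P'\leq 1/2$ rather than $\leq 1$, so the sliver $1/2<|z|P'\leq 1$ should be routed through one of the two cases with a trivial adjustment. Second, the displayed chain $\frac{(1+H|z|P')^{1-\delta}}{H|z|}\ll 1+P'(H|z|P')^{-\delta}$ fails when $H|z|P'<1$: there the left side is $\asymp 1/(H|z|)$, which exceeds $P'(H|z|P')^{-\delta}$ by a factor $(H|z|P')^{\delta-1}>1$. The conclusion is still correct, but one should retain the subtracted $1$ when evaluating $\int_0^{H|z|P'}(1+v)^{-\delta}\,dv=\frac{(1+H|z|P')^{1-\delta}-1}{1-\delta}$, which is $\ll H|z|P'$ when $H|z|P'\leq 1$; this gives $S_0(P')\ll P'\leq P'(H|z|P')^{-\delta}$ directly in that sub-range, matching the target.
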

\begin{proof}
The bound is obvious if $1\ll |z|\ll 1$. So it is enough to assume that $0< |z|< 1/2$, say. By changing the variables to $\z=L\y$, it is enough to bound
\begin{align*}
\prod_{i=1}^{n}\sum_{ |z_i|\leq |L|P}((1+H\|zz_i\|)^{-\delta}+C).
\end{align*}
To bound the above expression, without loss of generality, we may assume that $z$ is positive. Let $N$ denote the nearest integer to $1/z$, which means $|N-1/z|\leq 1/2$. Moreover, since $0<z<1/2$, $N\geq 2$ and therefore, $|z-1/N|\leq z/(2N)<1/N^2$. We now write $z=1/N+z'$, where $|z'|<1/N^2$. We begin by noting that for any real number $r$, and for all but at most one integer $x$ satisfying $|x|<N/2$, we must have
\begin{align}\label{eq:extra}
\|r+zx\|=\|r+x/N+xz'\|\gg \|r+x/N\|,
\end{align}
since $|xz'|<1/(2N)$. Since, $L$ is assumed to be fixed throughout, our constants are free to depend on it, and therefore it is enough to look at
\begin{align*}
\prod_{i=1}^{n}\sum_{ -P\leq z_i\leq P}((1+H\|zz_i\|)^{-\delta}+C).
\end{align*}
If $P\geq N/2$, we begin by writing $z_i=z_{i,1}+\lceil N/2\rceil z_{i,2}$, where $|z_{i,1}|<N/2$. In the light of our observation \eqref{eq:extra}, for a fixed $i$,  
\begin{align*}
&\sum_{|z_i|\leq P}((1+(H\|zz_i\|))^{-\delta}+C)\ll PC+\sum_{0\leq |z_{i,2}|\leq P/N}\sum_{0\leq |z_{i,1}|<N/2}(1+H\|zz_{i,1}+z\lceil N/2\rceil z_{i,2}\|)^{-\delta}\\&\ll PC+\sum_{0\leq |z_{i,2}|\leq P/N}(1+\sum_{0\leq |z_{i,1}|<N/2}(1+H\|z_{i,1}/N+z\lceil N/2\rceil  z_{i,2}\|)^{-\delta})\\&\ll PC+\sum_{0\leq |z_{i,2}|\leq P/N}(1+\sum_{0\leq |z_{i,1}|<N/2}(1+|Hz_{i,1}/N|)^{-\delta})\\
&\ll P/N(1+NH^{-\delta})+PC\ll P(1/N+H^{-\delta}+C).
\end{align*}
On the other hand if $P<N/2$, then
\begin{align*}
&\sum_{0\leq z_i\leq P}((1+(H\|zz_i\|))^{-\delta}+C)\ll \sum_{0\leq z_i\leq P}((1+(H|z_i/N|))^{-\delta}+C)\\&\ll PC+1+\sum_{0<|z_i|\leq P}(H|z_i/N|)^{-\delta}\ll 1+PC+H^{-\delta}N^{\delta}P^{1-\delta}\\
&\ll P(1/P+(HP/N)^{-\delta}+C).
\end{align*}
Therefore,
\begin{align*}
&\prod_{i=1}^{n}\left(\sum_{ |z_i|\leq P}((1+H\|zz_i\|)^{-\delta}+C)\right)\ll P^n\prod_{i=1}^{n}(1/P+|z|+(HP|z|)^{-\delta}+H^{-\delta}+C),
\end{align*}
which implies the lemma.
\end{proof}
\section{Exponential sum estimates}
In this section, we will assume that $f$ and $w$ satisfy \eqref{eq:fprod} and \eqref{eq:wdef} respectively. Throughout, let $x_0\in M$ be an arbitrary point and let
\begin{align*}
x_0=(x_{0,1},...,x_{0,n}), \,\,\,\textrm{ where }x_{0,i}\in M_0\textrm{ for }i=1,...,n.
\end{align*} Given any $P>1$ and and any $\alpha\in\RR$, our prime focus in this section will be to establish bounds for the exponential sum $S(\alpha)$ defined in \eqref{eq:Szdef}:
\begin{equation*}
S(\alpha):=\sum_{\substack{\x\in\ZZ^n}}w(\x/P)f(u(\x)x_0)e(\alpha F(\x)).
\end{equation*}

 We would need to estimate $S(\alpha)$ ``near'' a rational number $0\leq a/q<1$. Therefore throughout, let $\alpha=a/q+z$, where $|z|<q^{-2}$. We would need to bound $S(a/q+z)$ in two different ways, which will be our focus in this section. For our first bound, i.e. Lemma \ref{lem:splitbound}, we will begin by splitting the sum over $\x $ as $\x_1+N\x_2$, for a suitable choice of $N$, depending on $z$. For a fixed choice of $\x_2$, we will estimate the corresponding exponential sum separately, and gain from the fact that for most of the values of $\x_2$, we would be able to bound the exponential sum satisfactorily. The second bound (Lemma \ref{lem:van der}) will be provided by van der Corput differencing. The first bound will be useful to deal with mid-ranges of $z$ and the latter will be used to deal when $z$ is small or relatively large.
\begin{lemma}\label{lem:splitbound}
Let $ P\in \ZZ_{>0}$,  let $f\in\C^\infty(M)$ and $w\in\C^\infty_c((-1,1)^n)$ satisfying \eqref{eq:fprod} and \eqref{eq:wdef} respectively, and let $\alpha\in\RR$ satisfying $\alpha=a/q+z$, where $ |z|\leq q^{-1}Q^{-1}$ where $1\leq q\leq Q=P^\Delta$, say. Then, given any $0<\ve\ll_\Delta 1$ we have
\begin{equation}\label{eq:boundminor1}
|S(\alpha)|\ll_{\ve,\Delta} S_{\infty,3n}(w)\|f\|_{\L^\infty_{9n+\ve}} P^{n+\ve}(q^{n/2}(|z|+1/P)^{n/6}+q^{-n/2}(1+|Pz^{1/2}|)^{-n/6}).
\end{equation}
\end{lemma}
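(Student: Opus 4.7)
The approach is to lower the effective degree of the quadratic phase by splitting the summation variable at an intermediate scale. Choose $N \in \ZZ_{>0}$ of size approximately $|z|^{-1/2}$ (capped above by $P$ and below by $1$), so that on a box of side $N$ the residual phase $e(zF(\x_1))$ is bounded. Insert a smooth factorizable partition of unity $\{\phi(\cdot/N - \x_2)\}_{\x_2 \in \ZZ^n}$ with $\phi \in \C^\infty_c(\RR^n)$ of product form, substitute $\x = \x_1 + N\x_2$, and write
\[
S(\alpha) = \sum_{\x_2 \in \ZZ^n} \Sigma_{\x_2}, \quad \Sigma_{\x_2} = \sum_{\x_1 \in \ZZ^n} \phi(\x_1/N)\, w\!\left(\tfrac{\x_1+N\x_2}{P}\right) f(u(\x_1+N\x_2) x_0)\, e(\alpha F(\x_1+N\x_2)),
\]
where the outer sum is effectively supported on $|\x_2| \lesssim P/N \sim P|z|^{1/2}$.

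Using the abelian structure $u(\x_1+N\x_2) = u(\x_1) u(N\x_2)$ and $f = \prod_i f_i$, I factor $f(u(\x_1+N\x_2) x_0) = \prod_i f_i(u_0(x_{1,i}) y_{0,i}(\x_2))$ with $y_{0,i}(\x_2) := u_0(N x_{2,i}) x_{0,i}$, and expand $F(\x_1+N\x_2) = F(\x_1) + 2N \x_1^T \tilde L \x_2 + N^2 F(\x_2)$, where $\tilde L$ is the symmetric part of $L$. Writing $\alpha = a/q + z$, the factor $e(zF(\x_1))$ is bounded and slowly varying on the support of $\phi(\cdot/N)$ and can be absorbed into the smooth weight with $k$-th derivative controlled at the natural scale $N^{-k}$. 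I then perform a second split $\x_1 = q\vecv + \vecr$, with $\vecr \in \{0,\ldots,q-1\}^n$ and $\vecv \in \ZZ^n$ (effectively $|\vecv| \lesssim N/q$), so that $(a/q)F(\x_1) \equiv (a/q)F(\vecr) \pmod 1$ depends only on $\vecr$, while the cross-character $e(2\alpha N \x_1^T \tilde L \x_2)$ decomposes as a factor constant in $\vecv$ times a $\vecv$-linear character $\prod_i e(c_i v_i)$ with coefficient $c_i$ involving only the irrational part $z$ and $\x_2$ (the $a/q$ portion of the $\vecv$-linear coefficient vanishes modulo $\ZZ$ since $(L+L^T)\x_2 \in \ZZ^n$ and $aNq v_i \in \ZZ$).

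For each fixed $(\x_2, \vecr)$, the remaining $\vecv$-sum is a product of twisted horocyclic averages $\sum_{v_i} \tilde W_i(v_i)\, f_i(u_0(q v_i + r_i)\, \cdot)\, e(c_i v_i)$, up to an absorbable bounded phase from the residual $e(zq^2 F(\vecv))$; after a rescaling $s = q v_i$ each $1$-D factor fits the setup of Lemma \ref{lem:fsum} with scale parameter $N$, modulus $q$, and twist $c_i/q$, producing per-coordinate decay of the form $\ll N\bigl((1+\|qc_i\|N/q^2)^{-1/6} + q N^{-1/6}\bigr)$ times the appropriate Sobolev norms. Taking the product over $i$, summing over $\vecr$ using the Gauss-sum bound $|\sum_\vecr e((a/q)F(\vecr))| \ll q^{n/2}$, and summing over $\x_2$ via Lemma \ref{lem:fdisc} applied to the lattice argument $\x_2 \mapsto 2Nqz\,\tilde L\,\x_2$, converts the per-coordinate horocyclic factors $(1+\|qc_i\|N/q^2)^{-1/6}$ into the uniform decay $(1+|Pz^{1/2}|)^{-n/6}$, giving the second term $P^{n+\ve}q^{-n/2}(1+|Pz^{1/2}|)^{-n/6}$ of the claimed bound. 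The first term $P^{n+\ve}q^{n/2}(|z|+1/P)^{n/6}$ is assembled from the non-cancellation pieces of Lemma \ref{lem:fdisc} together with the $qN^{-1/6}$ trivial term of Lemma \ref{lem:fsum}.

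The main obstacle is the residual phase $e(zq^2 F(\vecv))$ arising after the second split: although bounded on the effective range $|\vecv| \lesssim N/q$ (as $|z|N^2 \lesssim 1$), it fails to factorize across the coordinates $v_i$ because of the cross terms $L_{ij} v_i v_j$ for $i \neq j$, and so it cannot simply be tensored into the product structure that permits coordinate-wise application of Lemma \ref{lem:fsum}. It must be absorbed either via a truncated Taylor expansion of $e(zq^2 F(\vecv))$ or through an iterated smooth partition; controlling the Sobolev losses so that the combinatorial bookkeeping remains polynomial in $n$ rather than exponential is the technically delicate point. Tracking the balance between these absorption losses, the Gauss-sum saving $q^{-n/2}$, and the horocyclic gain dictates the $1/6$ exponents appearing throughout the final bound, which are inherited from the $P^{-1/6}$ decay in Lemma \ref{lem:FFT}.
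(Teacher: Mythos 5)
Your overall strategy matches the paper's proof essentially step for step: split $\x=\x_1+N\x_2$ at scale $N\asymp|z|^{-1/2}$ via a smooth partition (the paper uses \cite[Lemma~2]{Heath-Brown96}), perform a second split of the inner variable modulo $q$, apply Poisson summation (implicit in your appeal to Lemma~\ref{lem:fsum}, which is a bound on Poisson-dual sums of integrals), extract a Gauss sum saving $q^{n/2}$, and sum the resulting per-coordinate decay over $\x_2$ via Lemma~\ref{lem:fdisc}. So the architecture is correct.

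However, you have flagged but not closed the single step on which the whole lemma turns: the non-factorizable quadratic phase $e(zF(\z))$ (equivalently $e(zq^2F(\vecv))$ before Poisson). You write that this ``must be absorbed either via a truncated Taylor expansion\ldots or through an iterated smooth partition'' and that controlling the resulting losses ``is the technically delicate point'' --- that is precisely where the proof lives, and a proposal that leaves it open has a genuine gap. The paper resolves it exactly as you guess: expand $e(zF(\z))=\sum_{|\beta|\le k/\ve}c_\beta(z^{1/2}\z)^\beta+O_{k,\ve}(P^{-k})$, with each monomial $(z^{1/2}\z)^\beta$ factorizing across coordinates. Two things then matter, and neither is the issue you worry about. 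First, your concern about combinatorics being ``exponential in $n$'' is moot --- $n$ is a fixed constant, the number of multi-indices with $|\beta|\le k/\ve$ is a constant depending only on $n,k,\ve$, and the coefficients satisfy $|c_\beta|\ll_\beta1$. Second, what actually forces the construction is the choice of $N$: the paper takes $N=\min\{\lfloor P^{-\ve}|z|^{-1/2}\rfloor,P\}$, not merely $N\approx|z|^{-1/2}$, so that $|zF(\z)|\ll P^{-2\ve}$ on the support $|\z|\ll N$; this is what makes the Taylor tail $O(P^{-k})$ after only $O(k/\ve)$ terms (a constant). With your choice $N\approx|z|^{-1/2}$ one only has $|zF(\z)|\ll1$, and achieving a power-saving error $O(P^{-k})$ then requires a number of Taylor terms growing with $P$, which would blow up the Sobolev constants. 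So the small $P^{-\ve}$ truncation in $N$ is not cosmetic --- it is what makes the degree-lowering via Taylor expansion legitimate, and it is what ultimately produces the benign $P^\ve$ loss in the final bound \eqref{eq:boundminor1}.

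A secondary remark: your stated per-coordinate decay $(1+\|qc_i\|N/q^2)^{-1/6}$ does not quite match what Lemma~\ref{lem:fsum} yields after the paper's normalisation; applied at scale $N$ with modulus $q$ and frequency $c=-2Nz(L\y_0)_i$ it gives $(1+\|qc\|N/q)^{-1/6}+qN^{-1/6}$, and it is this form that Lemma~\ref{lem:fdisc} is calibrated to sum over $\y_0$. Finally, the two cases $N=P$ (when $|z|$ is extremely small) and $N<P$ (the generic case) need to be treated separately at the end, since Lemma~\ref{lem:fdisc} is only invoked in the latter --- this split is where the term $q^{n/2}(|z|+1/P)^{n/6}$, containing both $|z|$ and $1/P$, arises.
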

\begin{proof}
 Let $0<\ve< \Delta/2$ be a small, positive number. Let $N=\min\{\lfloor  P^{-\ve}|z|^{-1/2}\rfloor,P\}$. The condition $|z|\leq q^{-1}P^{-\Delta}$ and that $\ve<\Delta/2 $ implies that  $1\leq N$. We begin by splitting the sum over $\x$ in \eqref{eq:Szdef} into $O((P/N)^n)$ sums of length $N$ each. \cite[Lemma 2]{Heath-Brown96} hands us an elegant and smooth way of doing so. \cite[Lemma 2]{Heath-Brown96} gives us that for any $0<\delta\leq 1$, there is a smooth function $\omega_\delta$ satisfying
 \begin{equation}
 \label{eq:omegasplit}
 \omega(x)=\delta^{-1}\int \omega_\delta(\frac{x-y}{\delta},y)dy.
 \end{equation}
 The function $\omega_\delta$ further satisfies 
 \begin{equation}\label{eq:omegader}
| \partial_{x,y}^\beta \omega_\delta(x,y)|\ll_{\beta}S_{\infty,|\beta|}(\omega).
 \end{equation}
Moreover, for a fixed $y$, the $x$-support of $w_\delta(x,y)$, is contained in the set $\{|x|\leq 1\}$, and the support of $\omega_\delta(\frac{x-y}{\delta},y)$ is contained in the support of $\omega$ for every $y$. Since $\omega$ is supported in $(-1,1)$, this implies that $\omega_\delta$ is supported in the set $[-1,1]\times [-1-\delta,1+\delta]$.

Using our definition of the function $w$ in \eqref{eq:wdef}, we may then analogously obtain
 \begin{equation}
 \label{eq:wsplit}
 w(\x)=\delta^{-n}\int w_\delta(\frac{\x-\y}{\delta},\y)d\y,
 \end{equation}
where
\begin{equation}
w_\delta(\x,\y)=\prod_{i=1}^n\omega_\delta(x_i,y_i).
\end{equation}
Thus, for any $0<\delta\leq 1$, and any $\x\in\RR^n$, we have
\begin{align*}
 w(\x/P)&=\delta^{-n}\int w_\delta(\frac{\x}{P\delta}-\frac{\y}{\delta},\y)d\y=\int w_\delta(\frac{\x}{P\delta}-\y,\delta\y)d\y\\
 &=\sum_{\y_0\in\ZZ^n}\int_{|\y_1|<1/2} w_\delta(\frac{\x}{P\delta}-\y_0-\y_1,\delta(\y_0+\y_1))d\y_1\\
 &=\sum_{\y_0\in\ZZ^n}W_{\delta,\y_0}(\frac{\x-P\delta\y_0}{P\delta}),
\end{align*}
where 
\begin{equation}\label{eq:Wdef}
W_{\delta,\y}(\x)=\int_{|\y_1|<1/2} w_\delta(\x-\y_1,\delta(\y+\y_1))d\y_1.
\end{equation}
Since the support of $w_\delta$ is contained in the hypercube $[-1,1]^n\times [-1-\delta,1+\delta]^n$, the sum over $\y_0$ is contained in the set $|\y_0|\ll \delta^{-1}$ and for such $\y_0$'s the function $W_{\delta,\y_0}(\x) $ is supported in the set $\{|\x|< 3/2\}$.

We now choose $\delta=N/P$, where $N$ as chosen at the beginning of the proof. Using this choice of $\delta$, we thus arrive at
\begin{equation*}
S(\alpha)=\sum_{\y_0\in\ZZ^n}\sum_{\x\in\ZZ^n}W_{\delta,\y_0}(\frac{\x-N\y_0}{N})f(u(\x)x_0)e(\alpha F(\x)).
\end{equation*}

At this point we introduce $\z=\x-N\y_0$. The above expression can be rewritten as
\begin{equation}\label{eq:salphasplit}
S(\alpha)=\sum_{\y_0\in\ZZ^n}\sum_{\z\in\ZZ^n}W_{\delta,\y_0}(\frac{\z}{N})f(u(\z+N\y_0)x_0)e(\alpha F(\z+N\y_0)).
\end{equation}
Note that for a fixed value of $\y_0\in\ZZ^n$, the function $W_{\delta,\y_0}(\z)$ is a smooth function supported in the set $\{|\z|< 2\}$. Moreover, using the bounds on the derivatives of $\omega_\delta$ in \eqref{eq:omegader}, we further have
\begin{equation}\label{eq:Wderibound}
|\partial^\beta_\z W_{\delta,\y_0}(\z)|\ll_{\beta}S_{\infty,|\beta|}(w).
\end{equation}
 The sum over $\y_0$ is supported in the set $\{|\y_0|\ll P/N\}$.

Let $\alpha=a/q+z$, as given. We now make a further change of variables $\z=\z_0+q\z_1$ to write $S(\alpha)$ as:
\begin{align}\label{eq:Ssimpli}
S(\alpha)=\sum_{\y_0\in\ZZ^n}\sum_{0\leq \z_0<q}\sum_{\z_1\in\ZZ^n}W_{\delta,\y_0}(\frac{\z_0+q\z_1}{N})f(u(\z)x_1)e(\alpha F(\z_0+q\z_1+N\y_0))).
\end{align}
Here, the notation $0\leq \z_0<q$ mean that each co-ordinate of $\z_0$ is an integer between (and including) $0$ and $q-1$. Here, 
\begin{equation}
\label{eq:x1def}
x_1:=u(N\y_0)x_0.
\end{equation}
 We begin by noting that
\begin{align*}
&e((a/q+z)F(\z_0+q\z_1+N\y_0))\\&=e((a/q+z)(F(\z_0+q\z_1)+2N(L\y_0)\cdot (\z_0+q\z_1)+N^2F(\y_0))\\
&=e((a/q+z)N^2F(\y_0))e_q(a(F(\z_0)+2N(L\y_0)\cdot \z_0))e(z(F(\z_0+q\z_1)+2N(L\y_0\cdot(\z_0+q\z_1))),
\end{align*}
where $e_q(x):=\exp(2\pi i x/q)$ as is a standard notation. Recall here that $L$ is the $n\times n $ integer matrix defining $F$.
For now, we will treat $\y_0$ as fixed and concentrate on the exponential sum
\begin{equation*}
\begin{split}
S_1:=S_1(z,\y_0):=&\sum_{0\leq \z_0<q}e_q(a(F(\z_0)+2N(L\y_0)\cdot \z_0))\times\\&\sum_{\z_1\in\ZZ^n}W_{\delta,\y_0}(\frac{\z_0+q\z_1}{N})f(u(\z_0+q\z_1)x_1)e(z(F(\z_0+q\z_1)+2N(L\y_0\cdot(\z_0+q\z_1)).\end{split}
\end{equation*}

We may now apply Poisson summation formula to the sum over $\z_1$ to obtain
\begin{equation*}
S_1=q^{-n}\sum_{\v\in\ZZ^n}S_q(a,\v)I(z,-2NzL\y_0+\v/q),
\end{equation*}
where 
\begin{equation}\label{eq:sexpdef}
S_q(a,\v):=\sum_{\x\mod{q}}e_q(a(F(\x)+2NL\y_0\cdot \x)+\x\cdot\v),
\end{equation}
is a standard quadratic exponential sum and
\begin{equation}\label{eq:Izvdef}
I(z,\v):=\int W_{\delta,\y_0}(\z/N)f(u(\z)x_1)e(zF(\z)-\v\cdot\z)d\z,
\end{equation}
is the corresponding exponential integral. 

The exponential sum we encounter in \eqref{eq:sexpdef} is a standard quadratic exponential sum. A standard bound that leads to \cite[Lemma 25]{Heath-Brown96} hands us square root cancellations in the exponential sums for all $\v$'s.  This follows essentially from squaring and further changing the variable to $\x_3=\x_2-\x_1$:
\begin{equation*}\begin{split}
|S_q(a,\v)|^2&=|\sum_{\x\mod{q}}e_q(a(F(\x)+2NL\y_0\cdot \x)+\x\cdot\v)|^2\\
&=|\sum_{\x_1,\x_2\mod{q}}e_q(a(F(\x_2)-F(\x_1))+(2aNL\y_0+\v)\cdot (\x_2-\x_1))|\\
&\ll \sum_{\x_1\bmod{q}}|\sum_{\x_3\bmod{q}}e_q(a(F(\x_1+\x_3)-F(\x_1))+(2aNL\y_0+\v)\cdot \x_3) |\\
&\ll \sum_{\x_1\bmod{q}}|\sum_{\x_3\bmod{q}}e_q((aM\x_1+2aNL\y_0+\v)\cdot \x_3)|\\
&\ll q^n\{\x_1\bmod{q}:q\mid (2aM\x_1+2aNL\y_0+\v)\}\ll q^n \{\x\bmod{q}:q\mid 2M\x\}.
\end{split}
\end{equation*} 
Here, to get the last inequality, we have used that if $\y=\y_1$ and $\y_2$ are two solutions of $q\mid 2aM\y+2aNL\y_0+\v $, then their difference must satisfy $q\mid 2M(\y_1-\y_2)$. Using the Smith normal form for $M=SDT$, where $S,D,T $ are matrices with integer entries, where $S,T$ have determinant $\pm 1$ and $D$ is a diagonal matrix, we are able to obtain: $$\{\x_1\bmod{q}:q\mid 2M\x_1\}\ll_F 1.$$ To sum up, for any integer $q$, any $a$ satisfying $\gcd(a,q)=1$, and any $\v\in\ZZ^n$ we have
\begin{equation}
\label{eq:expsumbound}
|S_q(a,\v)|\ll_{F} q^{n/2},
\end{equation}
where the implied constant only depends on the discriminant of the form $F$. The reader may also refer to \cite[Lemma 2.5]{Vishe19} where \eqref{eq:expsumbound} is proved in the function field setting. A minor modification of this bound will work here.
We now turn to bounding the exponential integral. Note that the exponential integral we encounter here will turn out to be simpler than the typical quadratic exponential integral which shows up in the circle method considerations. This is due to the fact that we have truncated the the sum over $\x$ to ensure that the integral over $\z$ is over a box of smaller size. As a result, $|zF(\z)|\ll P^{-\ve}$, for all $|\z|\leq 2N $. We may now use a Taylor series expansion to write 
\begin{equation}\label{eq:taylor}
e(zF(\z))=e(F(z^{1/2}\z))=\sum_{|\beta|\leq k/\ve}c_{\beta} (z^{1/2}\z)^\beta+O_{k,\ve}(P^{-k}),
\end{equation}
where, the constants $c_\beta$ are absolutely bounded $$|c_{\beta}|\ll_{\beta} 1,$$ and given $\beta=(\beta_1,...,\beta_n)\in\NN^n$, and any vector $\z\in\RR^n$, $\z^\beta$ denote the monomial
\begin{align*}
\z^\beta:=\prod_{i=1}^n z_i^{\beta_i}.
\end{align*} 
In light of \eqref{eq:taylor}, assuming that $\log q\ll \log P$, we have
\begin{equation}\label{eq:s1sum}\begin{split}
|S_1|&=q^{-n}\sum_{|\beta|\leq k/\ve}|c_\beta|\sum_{\v\in\ZZ^n}S_q(a,\v)I_\beta(z,-2NzL\y_0+\v/q)|+O_{k,\ve}(S_{\infty,0}(w)\|f\|_{\L^\infty}N^nP^{-k})\\
&\ll_{k,\ve} q^{-n/2}\sum_{|\beta|\leq k/\ve}\sum_{\v\in\ZZ^n}|I_\beta(z,-2NzL\y_0+\v/q)|+O_{k,\ve}(S_{\infty,0}(w)\|f\|_{\L^\infty}N^nP^{-k}),
\end{split}
\end{equation}
where $I_\beta$ is the exponential integral:
\begin{equation}\label{eq:Ibetadef}
I_\beta(z,\v):=\int (z^{1/2}\z)^\beta W_{\delta,\y_0}(\z/N)f(u(\z)x_1)e(-\v\cdot\z)d\z.
\end{equation}
 Note here that $|z|^{1/2}<1/N$, and therefore $|z^{1/2}N|< 1$. We next write
\begin{equation}\label{eq:Ibetadef'}
\begin{split}
I_\beta(z,\v)&=(z^{1/2}N)^{|\beta|}\int (\z/N)^\beta W_{\delta,\y_0}(\z/N)f(u(\z)x_1)e(-\v\cdot\z)d\z\\
&=(z^{1/2}N)^{|\beta|}\int  W_{\delta,\y_0,\beta}(\z/N)f(u(\z)x_1)e(-\v\cdot\z)d\z,
\end{split}
\end{equation}
where $W_{\delta,\y_0,\beta}(\z):=\z^\beta W_{\delta,\y_0}(\z)$ is a smooth function whose derivatives are $\ll$ those of $W_{\delta,\y_0}$ and further applying \eqref{eq:Wderibound} we have
\begin{equation}
|\partial^{\beta'}_\z W_{\delta,\y_0,\beta}(\z)|\ll_{\beta',\beta} S_{\infty, |\beta'|}(w).
\end{equation}
The main advantage of the Taylor expansion in \eqref{eq:taylor} is that the integral in \eqref{eq:Ibetadef'} now splits as a product of $n$ separate one dimensional integrals. We may now invoke Lemma \ref{lem:fsum} to bound each of these one dimensional integrals. We thus end up with
\begin{equation}\label{eq:s1bound}\begin{split}
&\sum_{\v\in\ZZ^n}|I_\beta(z,-2NzL\y_0+\v/q)|\\&\ll_{\ve}N^{n+\ve}S_{\infty,3n}(w)\prod_{i=1}^n\|f_i\|_{\L^\infty_{9+\ve}}\left((1+\|-2qzN(L\y_0)_i\|N/q)^{-1/6}+qN^{-1/6}\right).
\end{split}
\end{equation}
Here, since $M_0$ is compact, we have used the $\L^\infty$ bound to replace the $\L^2$ norm, and similarly used \eqref{eq:Wderibound} to bound the norm of $W_{\delta,\y_0}$ appearing there. Moreover, using \eqref{eq:fprod}, we may replace  $\prod_{i=1}^n\|f_i\|_{\L^\infty_{9+\ve}} $ simply by $\|f\|_{\L^\infty_{(9+\ve)n}}$. Substituting the bound in \eqref{eq:s1bound} to \eqref{eq:s1sum} and further summing over $\y_0$ in \eqref{eq:Ssimpli}, we obtain
\begin{equation*}
\begin{split}
|S(\alpha)|&\ll_{k,\ve}S_{\infty,3n}(w)\|f\|_{\L^\infty_{(9+\ve)n}}\times\\&\left(\sum_{\substack{\y_0\in\ZZ^n\\ |\y_0|\ll P/N}}N^{n+\ve}q^{-n/2}\prod_{i=1}^n\left((1+\|-2qzN(L\y_0)_i\|N/q)^{-1/6}+qN^{-1/6}\right)+P^{n-k}\right).
\end{split}
\end{equation*}
Since we are free to choose $k$, we may henceforth choose $k=n$. Therefore, note that the first term in the above equation is always dominant in this case and hence the term $P^{n-n}=1$ can be disregarded. When $N=P$, we simplify the above bound to get
\begin{equation*}
\begin{split}
|S(\alpha)|&\ll_{\ve}S_{\infty,3n}(w)\|f\|_{\L^\infty_{(9+\ve)n}}\sum_{\substack{\y_0\in\ZZ^n\\ |\y_0|\ll P/N}}N^{n+\ve}q^{-n/2}\left(1+qN^{-1/6}\right)^n\\
&\ll_{\ve}S_{\infty,3n}(w)\|f\|_{\L^\infty_{(9+\ve)n}}P^{n+\ve}(q^{-n/2}+q^{n/2}P^{-n/6}).
\end{split}
\end{equation*}
On the other hand, when $N=O(|z|^{-1/2}P^{-\ve})<P$, we may employ Lemma \ref{lem:fdisc} to obtain
\begin{equation*}
\begin{split}
|S(\alpha)|&\ll_{\ve}S_{\infty,3n}(w)\|f\|_{\L^\infty_{(9+\ve)n}} P^{n+\ve}q^{-n/2}(N/P+|qNz|+|N/q|^{-1/6}+|zNP|^{-1/6}+qN^{-1/6})^n\\
&\ll_{\ve}  P^{n+(n+1)\ve}S_{\infty,3n}(w)\|f\|_{\L^\infty_{(9+\ve)n}}(q^{-n/2}|z^{1/2}P|^{-n/6}+q^{n/2}|z|^{n/6}).
\end{split}
\end{equation*}
Combining these two bounds, and choosing an $\ve\ll_{\Delta, n}1$, we get \eqref{eq:boundminor1}.
\end{proof}

The above bound would need to be supplemented by a standard van der Corput bound, which we will obtain in the following lemma:
\begin{lemma}\label{lem:van der}
Let $\alpha=a/q+z$, where $1\leq q\leq P$, $\gcd(a,q)=1$ and $|z|\leq 1/q^2$. Then for all $0<\ve\ll 1$, we have
\begin{align}\label{eq:boundminor2}
|S(\alpha)|
&\ll_\ve  S_{\infty,3n}(w) \|f\|_{\L^\infty_{9n+\ve}} P^{n+\ve}(q^{-1/2}+(P/q)^{-1/228})^{n}.
\end{align}
Moreover, there exists $\gamma_1:=\gamma_1(\Gamma_0)$ such that for any $\alpha$ as before, we have
\begin{align}\label{eq:boundmajor}
|S(\alpha)|\ll S_{\infty,3n}(w) \|f\|_{\L^\infty_{9n+1}}P^{n-\gamma_1} .
\end{align}
\end{lemma}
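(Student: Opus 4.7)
The plan is to establish both bounds via $n$-dimensional van der Corput differencing. Fix a parameter $1 \le H \le P$ to be optimised. The standard multidimensional van der Corput inequality gives
\begin{equation*}
|S(\alpha)|^2 \ll \frac{P^n}{H^n} \sum_{|\h|_\infty \le H} \biggl|\sum_{\x \in \ZZ^n} w(\x/P)\overline{w((\x+\h)/P)}\, f(u(\x)x_0)\overline{f(u(\x+\h)x_0)}\, e\bigl(\alpha(F(\x+\h)-F(\x))\bigr)\biggr|.
\end{equation*}
Because $F$ is quadratic, $F(\x+\h)-F(\x) = 2\h^{t}L\x + F(\h)$ is linear in $\x$. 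Combined with $u(\x+\h)=u(\x)u(\h)$ and the factorisations $w=\prod_i\omega$ and $f=\prod_i f_i$, and introducing $\Phi_{i,h_i}(g) := f_i(g)\overline{f_i(u_0(h_i)g)}$ on $M_0$, the inner sum factorises as $e(\alpha F(\h))\prod_{i=1}^n T_i(h_i)$, with
\begin{equation*}
T_i(h_i) := \sum_{x_i \in \ZZ} \omega(x_i/P)\overline{\omega((x_i+h_i)/P)}\, \Phi_{i,h_i}(u_0(x_i)x_{0,i})\, e\bigl(2\alpha(L\h)_i x_i\bigr).
\end{equation*}

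Next, I would apply Poisson summation in each $x_i$ and estimate the resulting integral sum by Lemma \ref{lem:fsum} with $c = 2\alpha(L\h)_i$, after writing $2a(L\h)_i/q$ in lowest terms. Since $\Phi_{i,h_i}$ is generally not of zero mean, only \eqref{eq:fsum1} applies, yielding
\begin{equation*}
|T_i(h_i)| \ll P\log^{1/2}(P)\, \|\Phi_{i,h_i}\|_{\L^2_{9+\ve}}\, S_{\infty,3}(\omega)\, \Bigl((1 + \|2qz(L\h)_i\|P/q)^{-1/6} + qP^{-1/6}\Bigr),
\end{equation*}
where the Sobolev norm of $\Phi_{i,h_i}$ is controlled by $\|f_i\|_{\L^\infty_{9+\ve}}^2$ via the product rule and \eqref{eq:n(t)f Bound}. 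Summing $\prod_i |T_i(h_i)|$ over $|\h|_\infty \le H$ factorises by coordinate, and the one-dimensional specialisation of Lemma \ref{lem:fdisc} bounds each coordinate sum by $H\bigl(1/H + q|z| + (P/q)^{-1/6} + (HP|z|)^{-1/6} + qP^{-1/6}\bigr)$. Assembling the pieces, using the Dirichlet bound $|z| \le 1/q^2$, taking square roots, and optimising $H$ yields \eqref{eq:boundminor2}; the exponent $1/228$ arises from balancing the diagonal $\h = \vecnull$ contribution against the oscillatory product estimate, together with the factor of two incurred when extracting $|S(\alpha)|$ from $|S(\alpha)|^2$.

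For \eqref{eq:boundmajor}, I would repeat the scheme but exploit the zero-mean hypothesis on $f_1$ at coordinate $i=1$. Since $\Phi_{1,h_1}$ need not have mean zero, decompose $\Phi_{1,h_1} = \widetilde\Phi_{1,h_1} + m(h_1)$, where $m(h_1) := \int_{M_0} f_1(g)\overline{f_1(u_0(h_1)g)}\, d\mu_{G_0}(g)$. To $\widetilde\Phi_{1,h_1}$ apply \eqref{eq:fsum2} in place of \eqref{eq:fsum1}, which is uniform in the phase and supplies the genuine saving $P^{-\gamma}$. For the mean $m(h_1)$, quantitative mixing of the horocycle flow on $M_0$ produces decay as $h_1 \to \infty$ at a rate depending only on the spectral gap of $\Gamma_0$; this is precisely where the constant $\gamma_1 = \gamma_1(\Gamma_0)$ enters. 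The main obstacle is twofold: first, the fine $H$-optimisation that extracts the precise $1/228$ exponent for \eqref{eq:boundminor2}, which requires careful bookkeeping of the cross terms produced by Lemma \ref{lem:fdisc}; second, for \eqref{eq:boundmajor}, the spectral-gap-dependent mixing control of $m(h_1)$, which must be uniform in $h_1$ and compatible with the per-coordinate product factorisation underlying the van der Corput step.
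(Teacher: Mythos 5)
Your overall scaffolding matches the paper: $n$-dimensional van der Corput differencing, factorisation of the inner sum into $n$ one-dimensional sums in $x_i$, Poisson summation plus Lemma \ref{lem:fsum} with the Sobolev growth $(1+|h_i|)^{2k}$ from \eqref{eq:n(t)f Bound}, and finally an $H$-optimisation. However, there is a genuine gap in how you handle the sum over $\h$, and this is where the exponent $1/228$ actually comes from. The paper does \emph{not} use Lemma \ref{lem:fdisc} here (that lemma is used only in Lemma \ref{lem:splitbound}). Instead, after changing variables $L\h\mapsto\h$, the paper separates the $h_i=0$ term (bounding it trivially, with Sobolev factor $1$, not $H^{18}$) and for $h_i\ne 0$ exploits the arithmetic input from the Dirichlet approximation: choosing $H\le q/(4|L|)$ forces $q\nmid h_i$ for $0\ne|h_i|\le|L|H$, and $|2zh_i|<1/(2q)$ then gives $\|2\alpha h_i\|\gg 1/q$, hence $(1+\|2\alpha h_i\|P)^{-1/6}\ll(P/q)^{-1/6}$ uniformly. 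This yields the clean per-coordinate bound $H^{18}(P/q)^{-1/6}+H^{-1}$, optimised by $H=\min\{q/(4|L|),(P/q)^{1/114}\}$. Your substitution of Lemma \ref{lem:fdisc} fails on two counts: it introduces the term $(HP|z|)^{-1/6}$, which can exceed $1$ when $|z|$ is small (there is no useful lower bound on $|z|$ in the hypotheses of Lemma \ref{lem:van der}); and it does not disentangle the Sobolev factor $(1+|h_i|)^{18}$ from the diagonal $h_i=0$ term, so the naive uniform bound $H^{18}$ would be attached to the $1/H$ contribution as well, destroying the saving ($H^{17}\gg 1$). Without the observation $\|2\alpha h_i\|\gg 1/q$, the argument does not close.

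For \eqref{eq:boundmajor}, your decomposition $\Phi_{1,h_1}=\widetilde\Phi_{1,h_1}+m(h_1)$ with mixing control of the autocorrelation $m(h_1)$ is a legitimate variant, and in fact is more scrupulous than the paper, which applies \eqref{eq:fsum2} directly to $f_{h_1,1}$ even though $f_{h_1,1}$ is not of zero mean. But you should note that $m(0)=\|f_1\|_{\L^2}^2$ does not decay; the $h_1=0$ term must be absorbed into the $H^{-1}$ diagonal contribution, exactly as in the proof of \eqref{eq:boundminor2}. Also be aware this introduces an effective-mixing ingredient not used in the paper's own argument, though it is standard and depends only on the spectral gap, which is compatible with $\gamma_1=\gamma_1(\Gamma_0)$.
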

\begin{proof}

 We start by noticing that for any $H\in\ZZ_{>0}$,
\begin{align*}
H^{n}S(\alpha):=\sum_{\x}\sum_{\substack{0\leq \h< H}}G(\x+\h),
\end{align*}
say, where 
\begin{equation}\label{eq:Gdef}G(\x)=w(\x/P)f(u(\x)x_0)e(\alpha F(\x)).\end{equation} Here $0\leq \h<H$ is a shorthand notation to denote that $h_i\in\ZZ$ satisfying $0\leq h_i<H$ for all $1\leq i\leq n$. Recall that $w$ is assumed to be supported in $(-1,1)^n$. Throughout, we will assume that $H\leq P/2$. Thus, the sum over $\x$ is supported in the set $-P\ll \x\ll P$. We may now use this fact and use Cauchy-Schwartz inequality for the sum over $\x$ to get
\begin{align*}
H^{2n}|S(\alpha)|^2&\ll P^{n}\sum_{\h_1,\h_2}\sum_{\substack{\x\in\ZZ^n\\0\leq \x+\h_1,\x+\h_2<P }}G(\x+\h_1)\overline{G(\x+\h_2)}\\
&\ll P^n\sum_{|\h|<H}N(\h)\sum_{\x\in\ZZ^n }G(\x+\h)\overline{G(\x)},
\end{align*}
where
 $$N(\h):=\#\{0\leq \h_1,\h_2<H: \h=\h_1-\h_2\}\leq H^n.$$
Thus,
\begin{align}\notag
|S(\alpha)|^2
&\ll P^nH^{-2n}\sum_{|\h|<H}N(\h)\sum_{\x\in\ZZ^n}w_\h(\x/P)f_{\h}(u(\x)x_0)e(\alpha(F(\x+\h)-F(\x)))\\
&\ll  P^nH^{-n}\sum_{|\h|<H}\left|\sum_{\x\in\ZZ^n}w_\h(\x/P)f_{\h}(u(\x)x_0)e(2(\alpha L\h)\cdot\x)\right|,\notag\\
&\ll  P^nH^{-n}\sum_{|\h|<H}\prod_{i=1}^n\left|\sum_{x_i\in\ZZ^n}\omega_{h_i}(\x/P)f_{i,h_i}(u_0(x_i)x_{0,i})e((2\alpha L\h)_ix_i)\right|,
\label{eq:vander2}
\end{align}
where since both $f$ and $w$ are assumed to be factorisable (see \eqref{eq:fprod} and \eqref{eq:wdef}), for any $y\in M$,
\begin{equation}
\label{eq:fhdefn}
f_\h(y):=f(u(\h)y)\overline{f(y)}:=\prod_{i=1}^n f_{i,h_i}(y_i):=\prod_{i=1}^n f_{i}(u_0(h_i)y_{i})\overline{f_{i}(y_i)},
\end{equation}
and
$$w_\h(\x):=w(\x+\h/P)\overline{w(\x)}:=\prod_{i=1}^n\omega_{h_i}(x_i):=\prod_{i=1}^n \omega(x_i+h_i/P)\overline{\omega(x_i)}. $$
Our main bound here will come from applying Poisson summation to the inner sums in \eqref{eq:vander2}, i.e., we obtain:
\begin{align}\notag
&\sum_{x_i\in\ZZ^n}\omega_{h_i}(x_i/P)f_{i,h_i}(x_i)e((2\alpha L\h)_ix_i)\\
&\ll  \sum_{v_i\in\ZZ}\left|\int \omega_{h_i}(x_i/P)f_{i,h_i}(u_0(x_i)x_{0,i})e(((2\alpha L\h)_i-v_i))x_i)dx_i\right|.\label{eq:vander1}
\end{align}
We now estimate the sum on the right hand side of \eqref{eq:vander1} via Lemma \ref{lem:fsum}. Therefore, for any $1\leq i\leq n$, and any $\ve>0$, we have
\begin{equation}\label{eq:hlargeb}
\begin{split}
&\sum_{v_i\in\ZZ}\left|\int \omega_{h_i}(x_i/P)f_{i,h_i}(u_0(x_i)x_{0,i})e(((2\alpha L\h)_i-v_i)x_i)dx_i\right|\\&\ll S_{1,3}(\omega_{h_i})\|f_{i,h_i}\|_{\L^2_{9+\ve}}P\log^{1/2}(P)((1+\|(2\alpha L\h)_i\|P)^{-1/6}+P^{-1/6}).
\end{split}
\end{equation}
We begin by bounding the derivatives of $f_{i,\h}$. Using the relation \eqref{eq:n(t)f Bound}, for any $k\in\ZZ_{\geq 0}$, and an element in the Lie algebra $D$ of order $k$,
\begin{equation*}
\|Df_{i,h_i}(x)\|_{\L^2}\ll (1+|h_i|)^{2k}\sum_{D_1,D_2:\textrm{ord}(D_1)+\textrm{ord}(D_2)=k}|\|D_1(f_i)(u_0(h_i)x_i)D_2(f_{i})(x_i)\|_{\L^2}
\end{equation*}
 As a result, an application of Cauchy-Schwartz inequality further implies
\begin{equation}
\label{eq:fhderi}
\begin{split}
\|f_{i,h_i}\|_{\L^2_k}&\ll (1+|h_i|)^{2k}\|f_i\|_{\L^4_{k}}^2\ll (1+|h_i|)^{2k}\|f_i\|_{\L^\infty_{k}}^2.
\end{split}
\end{equation}
Upon interpolation, this bound can be assumed to be true for all $k\in\RR_{\geq 0}$. Similarly, 
\begin{equation}
S_{\infty,3}(\omega_{h_i})\ll S_{\infty,3}(\omega)^2.
\end{equation}
Substituting  \eqref{eq:fhderi} back in \eqref{eq:hlargeb}, we get
\begin{equation}\label{eq:hlargeb1}
\begin{split}
&\sum_{v_i\in\ZZ}\left|\int \omega_{h_i}(x_i/P)f_{i,h_i}(u_0(x_i)x_{0,i})e(((2\alpha L\h)_i-v_i)x_i)dx_i\right|\\&\ll H^{18}S_{\infty,3}(\omega)^2\|f_i\|_{\L^\infty_{9+\ve}}^2P^{1+\ve}((1+\|(2\alpha L\h)_i\|P)^{-1/6}+P^{-1/6}).
\end{split}
\end{equation}
The above expression holds for $\ve$ small enough. Note that since $H\ll P$, the extra powers of $H^\ve$ have been absorbed into the term $P^\ve$. When $(L\h)_i=0$, the above bound is rather wasteful. In this case, we bypass Poisson summation and directly use the following bound:
\begin{equation}
\label{eq:Mh=0}
|\sum_{x_i\in\ZZ}\omega_{h_i}(x_i/P)f_{i,h_i}(u_0(x_i)x_{0,i})|\ll \|f_{i,h_i}\|_{\L^\infty}\sum_{x_i\in\ZZ}|\omega_{h_i}(x_i/P)|\ll P\|f_{i}\|_{\L^\infty}^2 S_{\infty,0}(\omega)^2.
\end{equation}
Therefore, for $\ve>0$ small enough, we have
\begin{align*}
&\sum_{|\h|<H}\left|\sum_{\x\in\ZZ^n}w_\h(\x/P)f_{\h}(u(\x)x_0)e(2(\alpha L\h)\cdot\x)\right|\\
&\ll P^{n+\ve}\sum_{|\h|<H}\prod_{i=1}^nS_{\infty,3}(\omega)^2\|f_i\|_{\L^\infty_{9+\ve}}^2(\delta_{(L\h)_i\neq 0}H^{18}((1+\|(2\alpha L\h)_i\|P)^{-1/6}+P^{-1/6})+\delta_{(L\h)_i=0})\\
&\ll S_{\infty, 3n}(w)^2 \|f\|_{\L^\infty_{(9+\ve)n}}^2 P^{n+\ve}\sum_{|\h|<|L|H}\prod_{i=1}^n(\delta_{h_i\neq 0}H^{18}((1+\|2\alpha h_i\|P)^{-1/6}+P^{-1/6})+\delta_{h_i=0}).
\end{align*}
Here, to obtain the last equation, we have made a change of variable to replace $L\h$ by $\h$.
Eventually, we will choose $H\leq q/(4|L|)$, which means that since $|z|<q^{-2}$,
$$|2zh_i|<1/(2q), \forall |h_i|<|L|H.$$
Thus, if $q\nmid h_i$, then
\begin{equation}
\label{eq:simple}
\|2\alpha h_i\|\gg 1/q. \end{equation}
However, if $|h_i|\leq |L|H\leq q|L|/(4|L|)=q/2$, then $q\mid h_i$ if and only if $h_i=0$. Therefore, when $h_i\neq 0$, where $|h_i|\leq |L|H$, we may use \eqref{eq:simple}.
Therefore, 
\begin{equation}\label{eq:final1}
\begin{split}
&|S(\alpha)|^2\\&\ll P^{2n+\ve}H^{-n}S_{\infty, 3n}(w)^2 \|f\|_{\L^\infty_{(9+\ve)n}}^2\sum_{|\h|<|L|H}\prod_{i=1}^n(\delta_{h_i\neq 0}H^{18}((1+\|2\alpha h_i\|P)^{-1/6}+P^{-1/6})+\delta_{h_i=0})\\
&\ll P^{2n+\ve}S_{\infty, 3n}(w)^2 \|f\|_{\L^\infty_{(9+\ve)n}}^2(H^{18}(|P/q|^{-1/6})+H^{-1})^n\\
&\ll P^{2n+\ve}S_{\infty, 3n}(w)^2 \|f\|_{\L^\infty_{(9+\ve)n}}^2\prod_{i=1}^n(H^{18}q^{1/6}P^{-1/6}+H^{-1}).\end{split}
\end{equation}
We now choose $$H=\min\{q/(4|L|),(P/q)^{1/114})\}, $$ to get
\begin{align}\notag
|S(\alpha)|
&\ll P^{n+\ve}(q^{-1/2}+(P/q)^{-1/228})^{n}S_{\infty, 3n}(w) \|f\|_{\L^\infty_{9n+\ve}}.
\end{align}
When $|z|$ and $q$ are small, we only hope to exploit from the sum over $i=1$ and apply the second bound in \eqref{eq:fsum2}. More explicitly, we begin with the following variant of \eqref{eq:vander2}
\begin{equation}
\begin{split}
&|S(\alpha)|^2\\&\ll P^{2n-1}H^{-n}\left(\prod_{i=2}^nS_{\infty,0}(\omega)^2\|f_i\|_{\L^\infty_0}^2\right)\sum_{|\h|<H}\left|\sum_{\substack{x_1\in\ZZ}}\omega_{h_1}(x_1/P)f_{h_1,1}(u_0(x_1)x_{0,1})e(2(\alpha L\h)_1x_1)\right|\\
&\ll P^{2n-1}H^{-n}\left(\prod_{i=2}^nS_{\infty,0}(\omega)^2\|f_i\|_{\L^\infty_0}^2\right)\sum_{|\h|<H}\sum_{\substack{v_1\in\ZZ}}\left|\int\omega_{h_1}(x_1/P)f_{h_1,1}(u_0(x_1)x_{0,1})e(((2\alpha L\h)_1-v_1)x_1)dx_1\right|\\
&\ll P^{2n+\ve}S_{\infty,3n}(w)^2\|f\|_{\L^\infty_{9n+\ve}}^2 H^{18}(P^{-\gamma}+P^{-1/6}).
\end{split}
\end{equation}
 Here, we have applied \eqref{eq:fsum2} to bound the sum over $x_1$. Note that the worse Sobolev norms appearing here are only chosen to match with our bounds in \eqref{eq:final1}. The second part of the lemma now follows from choosing $H=P^{\min\{\gamma,1/6\}/36}$, setting $\gamma_1=\min\{\gamma,1/6\}/78$ and by choosing $\ve\ll_{\gamma,n}1$.
\end{proof}

\section{Proof of Theorem \ref{thm:Main thm}}
Recall that \eqref{eq:sigdef} writes $\Sigma(P)$ as
\begin{equation*}
\Sigma(P)=\int_{0}^{1}S(\alpha)d\alpha,
\end{equation*}
where $S(\alpha)$ as in \eqref{eq:Szdef}. Let $1<Q<P$ be a parameter to be chosen later in due course. An application of Dirichlet approximation hands us:
\begin{equation}\label{eq:Diri}
(0,1)\subseteq \bigcup_{q=1}^{Q} \bigcup_{\substack{0\leq a<q\\ \gcd(a,q)=1}}\{|a/q-z|<(qQ)^{-1}\}.
\end{equation}
We now split $(0,1)$ into two regions which typically correspond to the major and minor arc regimes in the circle method setting. Let $\ve_0$ be a small parameter to be chosen in due course. We define 
\begin{equation}\label{eq:mdef}
\begin{split}
\fm_1&:=\bigcup_{q=1}^Q \bigcup_{\substack{0\leq a<q\\ \gcd(a,q)=1}} \{|a/q-z|<q^{-2}P^{-2+\ve_0}\} \textrm{ and }\\
\fm_2&:=\bigcup_{q=1}^Q \bigcup_{\substack{0\leq a<q\\ \gcd(a,q)=1}} \{q^{-2}P^{-2+\ve_0}\leq |a/q-z|<(qQ)^{-1}\}.
\end{split}
\end{equation}
When $\alpha\in \fm_1$ the bound from \eqref{eq:boundmajor} will suffice. On the other hand, when $\alpha\in \fm_2 $, we will use a combination of the bounds in \eqref{eq:boundminor1} and \eqref{eq:boundminor2}.
\begin{lemma}
\label{lem:minorbound}
For any $n\geq 481$ and any $0<\ve_0\leq 1/240$,  we have
\begin{equation*}
\int_{\fm_2}|S(\alpha)|d\alpha\ll P^{n-2-\ve_0/4}S_{\infty,3n}(w)\|f\|_{\L^\infty_{9n+1}}.
\end{equation*}
\end{lemma}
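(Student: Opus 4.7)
The strategy is to combine the two bounds \eqref{eq:boundminor1} and \eqref{eq:boundminor2} in complementary regimes. Set $Q = P^\Delta$ and a threshold $q_0 = P^\eta$ for small positive parameters $\eta, \Delta$ to be fixed at the end. Apply \eqref{eq:boundminor1} on the part of $\fm_2$ with $q \leq q_0$, and apply \eqref{eq:boundminor2} on the complementary part with $q_0 < q \leq Q$. For each admissible pair $(a,q)$ the relevant $z$-slab has Lebesgue measure $\ll (qQ)^{-1}$, and summing over the $\phi(q) \leq q$ admissible residues $a \bmod q$ contributes an overall weight $\ll Q^{-1}$ per value of $q$.

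On the part $\{q > q_0\}$, expand $(q^{-1/2}+(P/q)^{-1/228})^n \ll q^{-n/2}+(P/q)^{-n/228}$. The tail $\sum_{q > q_0} q^{-n/2} \ll q_0^{1-n/2}$ contributes $\ll P^{n+\ve}Q^{-1}q_0^{1-n/2}$, which is bounded by $P^{n-2-\ve_0/4}$ provided $\Delta+\eta(n/2-1) \geq 2+\ve_0/4+\ve$. The partial sum $\sum_{q \leq Q}(P/q)^{-n/228} \ll P^{-n/228}Q^{n/228+1}$ yields a total of order $P^{n+\ve-(1-\Delta)n/228}$, acceptable provided $\Delta \leq 1-228(2+\ve_0/4+\ve)/n$. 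On the part $\{q \leq q_0\}$, integrating the first term of \eqref{eq:boundminor1} over the $z$-slab (where $(qQ)^{-1} \gg 1/P$) gives $\int q^{n/2}(|z|+1/P)^{n/6}\,dz \ll q^{n/3-1}Q^{-n/6-1}$, so the total contribution is $\ll P^{n+\ve}q_0^{n/3+1}Q^{-n/6-1}$, matching the target provided $\eta(n/3+1) \leq \Delta(n/6+1)-2-\ve_0/4-\ve$. The second term of \eqref{eq:boundminor1} is easily controlled by splitting the $z$-range at $|z|=1/P^2$ and using the lower cutoff $|z| \geq q^{-2}P^{-2+\ve_0}$ coming from the definition of $\fm_2$; the resulting total contribution is of order $P^{n-2+\ve_0(1-n/12)+\ve}$, comfortably below the target for $n \geq 13$ and $\ve$ small in terms of $\ve_0$.

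The main obstacle is the delicate balance between these three inequalities. Eliminating $\eta$ between the upper bound (from the first term of \eqref{eq:boundminor1}) and the lower bound (from the $q^{-n/2}$ tail of \eqref{eq:boundminor2}) yields $\Delta \geq 10(2+\ve_0/4+\ve)/(n+8)$, while the $(P/q)^{-n/228}$ contribution forces $\Delta \leq 1-228(2+\ve_0/4+\ve)/n$. These two intervals overlap exactly when $n$ is large enough; under the hypotheses $n \geq 481$, $\ve_0 \leq 1/240$, and $\ve$ chosen sufficiently small in terms of $n$ and $\ve_0$, a direct inequality check verifies that the window is non-empty. Fixing any $\Delta$ in this window and choosing $\eta$ at the lower boundary dictated by the $q^{-n/2}$ tail completes the proof.
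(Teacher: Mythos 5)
Your proof is correct and reaches the same threshold, but it organizes the estimate differently from the paper. The paper applies \emph{both} Lemmas~\ref{lem:splitbound} and~\ref{lem:van der} pointwise on all of $\fm_2$, observes that the first term of \eqref{eq:boundminor1} and the first term of \eqref{eq:boundminor2} combine via a geometric-mean bound $\min\{q^{1/2}|z|^{1/6},q^{-1/2}\}\leq |z|^{1/12}$, uses $|z|<(qQ)^{-1}$ to bound this and the remaining $z$-free terms $q^{1/2}P^{-1/6}$, $(P/q)^{-1/228}$ uniformly by $P^{-1/240}$ after picking a single parameter $Q=P^{1/20}$, and integrates the remaining $q^{-1/2}(1+|Pz^{1/2}|)^{-1/6}$ term against $dz$ exactly as you do. You instead partition $\fm_2$ at $q_0=P^\eta$ and apply each lemma only where it is naturally stronger — \eqref{eq:boundminor1} for small $q$, \eqref{eq:boundminor2} for large $q$ — which is conceptually cleaner but requires you to juggle two parameters $(\Delta,\eta)$ and check three constraints simultaneously. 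Your elimination gives the window $\frac{10c}{n+8}\leq \Delta\leq 1-\frac{228c}{n}$ with $c=2+\ve_0/4+\ve$; I verified this algebra, that it is non-empty for $n\geq 481$ and $\ve_0\leq 1/240$ with $\ve$ small (it just barely closes for $n$ in the high $470$s, so $481$ is safe), that the choice $\eta=(c-\Delta)/(n/2-1)$ is positive and at most $\Delta$, and that your handling of the second term of \eqref{eq:boundminor1} via the lower cutoff $|z|\geq q^{-2}P^{-2+\ve_0}$ matches the paper's computation \eqref{eq:B1} giving the exponent $\ve_0(1-n/12)$, which is negative enough once $n\geq 15$ (your "$n\geq 13$" should be $n\geq 15$, but this is immaterial here). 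It is a nice consistency check that the paper's choice $\Delta=1/20=0.05$ lands inside your window $[0.041,\,0.052]$ for $n=481$. What the paper's $\min$/geometric-mean device buys over your split is compactness — one parameter instead of two and no case distinction on $q$ — while your approach makes the balance of the two lemmas more transparent.
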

\begin{proof}
Let $Q=P^\Delta$ and let $0<\ve\ll_\Delta 1$ be an arbitrarily small number to be chosen later. We begin by combining bounds in \eqref{eq:boundminor1} and \eqref{eq:boundminor2} for any $\alpha=a/q+z$, where $|z|<(qQ)^{-1}$:
\begin{equation}\label{eq:firstb}
\begin{split}
&|S(a/q+z)|\ll_\ve P^{n+\ve}S_{\infty,3n}(w)\|f\|_{\L^\infty_{9n+\ve}}\times\\&\left(\min\{q^{1/2}|z|^{1/6},q^{-1/2}\}+q^{1/2}P^{-1/6}+q^{-1/2}(1+|Pz^{1/2}|)^{-1/6}+(P/q)^{-1/228}\right)^n\\
&\ll_\ve P^{n+\ve}S_{\infty,3n}(w)\|f\|_{\L^\infty_{9n+\ve}}\left(|z|^{1/12}+q^{1/2}P^{-1/6}+q^{-1/2}(1+|Pz^{1/2}|)^{-1/6}+(P/q)^{-1/228}\right)^n,
\end{split}
\end{equation}
where we have used a geometric mean to bound the first term inside the brackets on the right side.

We start first by examining the second last term:
\begin{equation}\label{eq:B1}
\begin{split}
&\sum_{q=1}^Q \sum_{\substack{0\leq a<q\\ \gcd(a,q)=1}}\int_{q^{-2}P^{-2+\ve_0}\leq|z|<(qQ)^{-1}}q^{-n/2}(1+|Pz^{1/2}|)^{-n/6}dz\\&\ll \sum_{q=1}^Q \sum_{\substack{0\leq a<q\\ \gcd(a,q)=1}}q^{-n/3}\int_{q^{-2}P^{-2+\ve_0}\leq|z|<(qQ)^{-1}}|qPz^{1/2}|^{-n/6}dz\\
&\ll \sum_{q=1}^Q q^{1-n/3}q^{-2}P^{-2}\int_{P^{\ve_0}\leq|z|<\infty}|z|^{-n/12}dz\ll P^{-2-\ve_0},
\end{split}
\end{equation}
as long as $n\geq 24$.
On the other hand,
\begin{equation*}
\begin{split}
(P/q)^{-n/228} 
+q^{n/2}P^{-n/6}\ll (P^{-1/228}Q^{1/228})^n+ P^{-n/6}Q^{n/2}.
\end{split}
\end{equation*}

Since $|z|\leq (qQ)^{-1}$, the term $|z|^{1/12}$ term may simply be bound by
\begin{equation}\label{eq:B4}
|z|^{1/12}\ll Q^{-1/12}.
\end{equation}
At this point, we choose $Q$ such that $Q^{1/12}=P^{1/228}/Q^{1/228}$, i.e., when $Q=P^{1/20}$ which means $\Delta=1/20$. For this choice of $Q$, 
\begin{equation}\label{eq:B2}
\begin{split}
(P/q)^{-n/228} 
+q^{n/2}P^{-n/6}+|z|^{n/12}\ll P^{-n/240}.
\end{split}
\end{equation}

Thus, as long as $n=481\geq 2\times 240+1$,
\begin{equation*}
\begin{split}
&\left(\min\{q^{1/2}|z|^{1/6},q^{-1/2}\}+(P/q)^{-1/228}+P^{-1/6}q^{1/2}\right)^n\ll P^{-2-1/240}.
\end{split}
\end{equation*}
Since the measure of $\fm_2$ is at most $1$, this leads to
\begin{equation}\label{eq:B7}
\begin{split}
&\int_{\fm_2}\left(\min\{q^{1/2}|z|^{1/6},q^{-1/2}\}+(P/q)^{-1/228}+P^{-1/6}q^{1/2}\right)^n d\alpha\ll P^{-2-1/240},
\end{split}
\end{equation}
as long as $n\geq 481$. 
Lemma \ref{lem:minorbound} now follows from combining bounds in \eqref{eq:B1} and \eqref{eq:B7} and further suitably choosing $\ve\leq \ve_0/4$.
\end{proof}
\begin{proof} (Proof of Theorem \ref{thm:Main thm}) In order to prove Theorem \ref{thm:Main thm}, it is enough to bound the contribution from $\alpha\in\fm_1$. Thus, using \eqref{eq:boundmajor}, for any $\ve_0$ we have
\begin{equation}\label{eq:majorbound}
\begin{split}
\int_{\fm_0} |S(\alpha)|d\alpha\ll P^{n-\gamma_1}\textrm{meas}(\fm_1)S_{\infty,3n}(w)\|f\|_{\L^\infty_{9n+1}}\ll  P^{n-2-\gamma_1+\ve_0}S_{\infty,3n}(w)\|f\|_{\L^\infty_{9n+1}}.
\end{split}
\end{equation}
Combining the results in Lemma \ref{lem:minorbound}  and \eqref{eq:majorbound}, and choosing $\ve_0=\min\{1/240,\gamma_1/2\}$ and setting $\gamma_0=\ve_0/4$, we establish Theorem \ref{thm:Main thm}.
\end{proof}
\begin{remark}\label{rk:diagonal} As mentioned in the introduction, the situation of diagonal forms is significantly easier. We will give a quick sketch of this argument here. In fact, it would be enough to have $F(\x)=F_1(\x_1)+F_2(\x_2)+F_3(\x_3)$, where $\x=(\x_1,\x_2,\x_3)$ where $\x_1,\x_2$ are at least two dimensional, and $\x_3$ being at least one dimensional. In this case, the exponential sum  $S(\alpha)$ naturally splits as
$$S(\alpha)=S_1(\alpha)S_2(\alpha)S_3(\alpha),$$
where $S_i$'s denote the corresponding exponential sums for the forms $F_i$ for $i=1,2,3$. We now apply the H\"older's inequality:
\begin{align*}
|\Sigma(P)|\leq \int_0^1 |S_1(z)S_2(z)S_3(z)|dz\leq \|S_3\|_{\L^\infty}\|S_1\|_{\L^2}\|S_2\|_{\L^2}.
\end{align*}
Using \eqref{eq:boundmajor}, we have $ \|S_3\|_{\L^\infty}\ll_{f} P^{n_3-\gamma_1}$, for some $\gamma_1>0$. Furthermore, for $i=1,2$, given any $\ve>0$, one may easily obtain
\begin{align*}
\int_0^1 |S_i(z)|^2dz&=\int_0^1 \sum_{\x_i,\x_i'\in\ZZ^{n_i}}W_i(\x_i/P)\overline{W_i(\x_i'/P)}f(u(\x_i)x_0)\overline{f(u(\x_i')x_0)}e(z (F(\x_i)-F(\x_i')))dz\\
&=\sum_{\x_i,\x_i'\in\ZZ^{n_i}}W_i(\x_i/P)\overline{W_i(\x_i'/P)}f(u(\x_i)x_0)\overline{f(u(\x_i')x_0)}\int_0^1 e(z (F(\x_i)-F(\x_i')))dz\\
 &\ll_{f} \#\{(\x_i,\x_i')\in\ZZ^{n_i}\times\ZZ^{n_i}:F_i(\x_i)-F_i(\x_i')=0, |\x_i|,|\x_i'|\ll P\}\ll_{f,\ve} P^{2n_i-2+\ve},
\end{align*}
where in the final bound we have used \cite[Theorem 2]{Heath-Brown02}, which applies as long as $n_1,n_2\geq 2 $. Combining these bounds, we end up with
\begin{align*}
|\Sigma(P)|\ll_{\ve,f} P^{n-2-\gamma_1+\ve},
\end{align*}
as long as $n_1,n_2\geq 2$ and $n_3\geq 1$.
\end{remark}
\section{Proof of Theorem \ref{thm:main thm1}}
We are now set to prove Theorem \ref{thm:main thm1}, which  will follow from Theorem \ref{thm:Main thm}. Throughout, we will assume that $n\geq 481$. We start by writing 
\begin{equation}
\sum_{\substack{\x\in\ZZ^n, |\x|<P\\ F(\x)=0 }}f(u(\x)x_0)=\sum_{\substack{\x\in\ZZ^n\\ F(\x)=0 }}W(\x/P)f(u(\x)x_0),
\end{equation}
where $W$ denotes the characteristic function of the hypercube $(-1,1)^n$. Since $F$ is supposed to have no local obstructions, the asymptotic formula \eqref{eq:NFPdef} implies that Theorem \ref{thm:main thm1} is equivalent to proving that
\begin{equation}
\lim_{P\rightarrow\infty}\frac{1}{P^{n-2}}\sum_{\substack{\x\in\ZZ^n\\ F(\x)=0 }}W(\x/P)f(u(\x)x_0)=0,
\end{equation}
for any continuous function $f$ of zero average.

 In order to invoke Theorem \ref{thm:Main thm}, we will approximate $W$ by a smooth function, and further approximate $f$ by a sum of factorisable functions of zero average. We start with the latter. Since $f$ is continuous and $M$ compact, using the
Stone–Weierstrass theorem for compact manifolds, given any $\ve>0$, we may write
\begin{align*}
f(g)=\sum_{i=1}^m h_i(g)+O(\ve),
\end{align*}
where $m$ may depend on $\ve$, and each $h_i$ is a smooth, factorisable function, that is, it is of the form
\begin{equation}\label{eq:factorizable}
h_i(g_1,...,g_n)=h_{i,1}(g_1)...h_{i,n}(g_n).
\end{equation}
Since $f$ is of zero average and $M$ is compact, we must further have
\begin{align*}
|\sum_{i=1}^m \int_{M}h_i(g)d\mu_{G}(g)|\ll \ve.
\end{align*}
Using this, we further reach:
\begin{align*}
f(g)=\sum_{i=1}^m h_i'(g)+O(\ve),
\end{align*}
where
\begin{equation}
h_i'(g)=h_i(g)-\int h_i(x)d\mu_G(x),
\end{equation}
is a function of zero average. Note that since $h_i$ is factorisable, $\int_M h_i(x)d\mu_G(x)=\prod_{j=1}^n \int_{M_0} h_{i,j}(x_j)d\mu_{G_0}(x_j)$. Now, we may next write
\begin{equation}\label{eq:h'pro}
\begin{split}
h_i'(g)&=\prod_{j=1}^n h_{i,j}(g_j)-\prod_{j=1}^n \int h_{i,j}(x_j)d\mu_{G_0}(x_j)\\&=\prod_{j=1}^n \left(\left((h_{i,j}(g_j)-\int h_{i,j}(x_j)d\mu_{G_0}(x_j)\right)+\int h_{i,j}(x_j)d\mu_{G_0}(x_j)\right)-\prod_{j=1}^n \int h_{i,j}(x_j)d\mu_{G_0}(x_j).
\end{split}
\end{equation}
Note that each function $h_{i,j}(g_j)-\int h_{i,j}(x_j)d\mu_{G_0}(x_j)$ is smooth and of zero average. After expanding out the product over $j$ in \eqref{eq:h'pro} and noticing that the constant term $\prod_{j=1}^n \int h_{i,j}(x_j)dx_j$ cancels out, we then write $h_i'$ as a sum of factorizable functions of zero average. Therefore, we may now assume that
\begin{equation}\label{eq:f-final}
f(g)=\sum_{i=1}^{m_1}\phi_i(g)+O(\ve),
\end{equation}
where $\phi_i$'s are factorisable functions of zero average. Note that the derivatives of $\phi_i$ also satisfy
\begin{equation}\label{eq:phideri}
\|\phi_i\|_{\L^\infty_k}\ll_{\ve, k, f} 1.
\end{equation}
 Therefore, we end up with
\begin{equation}\label{eq:smoothing}
\begin{split}
\sum_{\substack{\x\in\ZZ^n\\ F(\x)=0 }}W(\x/P)f(u(\x)x_0)&=\sum_{i=1}^{m_1}  \sum_{\substack{\x\in\ZZ^n\\ F(\x)=0 }}W(\x/P)\phi_i(u(\x)x_0)+O_f(\ve N_F(P))
\\
&=\sum_{i=1}^{m_1}  \sum_{\substack{\x\in\ZZ^n\\ F(\x)=0 }}W(\x/P)\phi_i(u(\x)x_0)+O_f(\ve P^{n-2}),
\end{split}
\end{equation}
using the asymptotic formula \eqref{eq:NFPdef}. Now let us focus on the sums corresponding to each $\phi_i$. In order to invoke Theorem \ref{thm:Main thm}, $W$ needs to be approximated by a smooth function. In order to do so, let $0<\delta<1$ be a parameter to be chosen in due course. Let $w$ be a smooth factorisable function of the type \eqref{eq:wdef} supported in $(-1,1)^n$. We may further assume that $w$ is a non-negative function taking values in the closed interval $[0,1]$, it takes value $1$ on the hypercube $(-1+\delta,1-\delta)^n$, and that the derivatives of $w$ satisfy
\begin{equation}
S_{\infty,k}(w)\ll \delta^{-k}.
\end{equation}
The asymptotic formula \eqref{eq:NFPdef} holds for any $P$, and therefore it hands us a constant $\gamma'>0$ depending only on $n$ and $F$ such that
\begin{equation*}
\begin{split}
&\sum_{\substack{\x\in\ZZ^n\\ F(\x)=0 }}W(\x/P)\phi_i(u(\x)x_0)\\&=\sum_{\substack{\x\in\ZZ^n\\ F(\x)=0 }}w(\x/P)\phi_i(u(\x)x_0)+O(\#\{\x\in\ZZ^n: (1-\delta)P\leq |\x|<P, F(\x)=0\})
\\
&= \sum_{\substack{\x\in\ZZ^n\\ F(\x)=0 }}w(\x/P)\phi_i(u(\x)x_0)+O(\|f\|_{\L^\infty}\delta P^{n-2})+O(\|f\|_{\L^\infty} P^{n-2-\gamma'}).
\end{split}
\end{equation*}
Since $\phi_i$ is a factorisable function of zero average, without loss of generality we can assume that it is of type \eqref{eq:fprod}. We are now able to apply Theorem \ref{thm:Main thm} to obtain
\begin{equation*}
\begin{split}
&\left|\sum_{\substack{\x\in\ZZ^n\\ F(\x)=0 }}W(\x/P)\phi_i(u(\x)x_0)\right|\ll_{f,\ve} \delta^{-9n}P^{n-2-\gamma_0}+\delta P^{n-2}+P^{n-2-\gamma'}.
\end{split}
\end{equation*}
At this point, we choose $\delta=P^{-\gamma_2}$, where $\gamma_2=\min\{\gamma', \gamma_0/(9n+1)\} $, and combine this bound with that in \eqref{eq:smoothing} to obtain
\begin{equation*}
\begin{split}
\left|\sum_{\substack{\x\in\ZZ^n\\ F(\x)=0 }}W(\x/P)f(u(\x)x_0)\right|\ll_{f} \ve P^{n-2}+C_\ve P^{n-2-\gamma_2},
\end{split}
\end{equation*}
where $C_\ve$ denotes a constant which depends only on $\ve, F, n$ and $\Gamma$. Since $\gamma_2$ is independent of $\ve$, for large enough $P$, we must have
\begin{equation*}
\begin{split}
P^{-(n-2)}\left|\sum_{\substack{\x\in\ZZ^n\\ F(\x)=0 }}W(\x/P)f(u(\x)x_0)\right|\ll_{f} \ve.
\end{split}
\end{equation*}
Since $\ve$ was chosen to be arbitrary, this establishes Theorem \ref{thm:main thm1}.
\bibliographystyle{plain}

\end{document}